\theoremstyle{plain}
\newtheorem{lemma}[subsection]{Lemma}
\newtheorem{theorem}[subsection]{Theorem}
\newtheorem{prop}[subsection]{Proposition} 
\newtheorem{cor}[subsection]{Corollary}
\theoremstyle{definition}
\theoremstyle{remark}
\newtheorem{example}[subsection]{Example}
\numberwithin{equation}{section} 
\numberwithin{figure}{section} 
\numberwithin{table}{section} 
\numberwithin{subsection}{section} 
\let\c@equation\c@subsection
\let\c@figure\c@subsection
\def\tensor{\mathop{\otimes}}
\def\mid{\,\middle\vert\,}
\newif\ifdraft
\def\borel{\ensuremath{B}}
\def\con{\ensuremath {N^*}}
\def\torus{\ensuremath{T}}
\def\N{\ensuremath{N\left(\mathbf k[t,t^{-1}]\right)}}
\def\Borel{\ensuremath{\mathcal B}}
\def\Borelhat{\ensuremath{\mathcal B}}
\def\cartan{\ensuremath{\mathfrak h}}
\def\simple{\ensuremath{\Simple_0}}
\def\Simple{\ensuremath S}
\def\roots{\ensuremath{\Delta_0}}
\def\proots{\ensuremath{\Delta_0^+}}
\def\Roots{\ensuremath{\Delta}}
\def\ord{\ensuremath{\operatorname{ord}}}
\def\gl{\ensuremath{G}}
\def\lgl{\ensuremath{\mathfrak g}}
\def\gL{\ensuremath{L\gl}}
\def\gLhat{\gL}
\def\gOhat{\ensuremath{L^+\gl}}
\def\gO{\gOhat}
\def\para{\ensuremath{P}}
\def\Para{\ensuremath{\mathcal P}}
\def\u{\ensuremath{\mathfrak u}}
\def\e{\ensuremath e}
\def\Y{\ensuremath{Y}}
\def\W{\ensuremath {W}}
\def\E{\ensuremath{E}}
\def\Z{\ensuremath{Z}}
\def\What{\ensuremath{\widehat\W}}
\def\coroots{\ensuremath{Q}}
\def\betac{\ensuremath{q}}
\def\w{\ensuremath{\varpi}}
\def\image{\ensuremath{\operatorname{Im}}}
\def\fij#1.#2.{\ensuremath{\chi(#2,#1)}}
\def\f#1.{\ensuremath{F^i_{#1}}}  
\def\row{\ensuremath{\mathcal Row}}
\def\red{\ensuremath{\mathcal Red}}
\def\blue{\ensuremath{\mathcal Blue}}
\def\set{\ensuremath{\mathcal{S}}}
\def\Lambda{\ensuremath{d}}
\def\Ni{\ensuremath{\mathcal N}}
\def\mod{\ensuremath{\mathrm{mod}\,}}
\def\dynkAnHat{\vskip -3ex$$\xymatrix{ \overset{\alpha_1}\bullet \ar@{-}[r] &\overset{\alpha_2}\bullet \ar@{-}[r] & \cdots\ar@{-}[r] & \overset{\alpha_{n-2}}\bullet \ar@{-}[r] &\overset{\alpha_{n-1}}\bullet\ar@{-}[dll]\\ &&\overset{\alpha_0}\bullet \ar@{-}[ull] & & }$$}
\def\dynkAn{$$\xymatrix{\overset{\alpha_1} \bullet \ar@{-}[r] & \overset{\alpha_2}\bullet \ar@{-}[r] & \cdots \ar@{-}[r]&\overset{\alpha_{n-2}}\bullet \ar@{-}[r] &\overset{\alpha_{n-1}} \bullet}$$}
\title{Cotangent Bundles of Partial Flag Varieties and Conormal Varieties of their Schubert Divisors}
\author{V. Lakshmibai}
\author{Rahul Singh} 
\date{\normalsize\today} 
\begin{document}

\maketitle 

\begin{abstract}
Let $P$ be a parabolic subgroup in $G=SL_n(\mathbf k)$, for $\mathbf k$ an algebraically closed field.
We show that there is a $G$-stable closed subvariety of an affine Schubert variety in an affine partial flag variety which is a natural compactification of the cotangent bundle $T^*G/P$.
Restricting this identification to the conormal variety $N^*X(w)$ of a Schubert divisor $X(w)$ in $G/P$, we show that there is a compactification of $N^*X(w)$ as an affine Schubert variety.
It follows that $N^*X(w)$ is normal, Cohen-Macaulay, and Frobenius split.
\end{abstract}

\section{Introduction}
Let the base field $\mathbf k$ be algebraically closed. 
Consider a cyclic quiver with $h$ vertices and
dimension vector ${\underline d}=(d_1,\cdots, d_h)$.
Let
$$Rep(\underline d, \widehat A_h)=\operatorname{Hom}(V_1,V_{2})\times \cdots\times \operatorname{Hom}(V_h,V_{1}),\ 
GL_{{\underline d}}=\prod_{1\le i\le h}\,GL{(V_i)}$$ 
We have a natural action of $GL_{{\underline d}}$ on $Rep(\underline d,\widehat A_h)$: for
$g=(g_1,\cdots,g_h)\in GL_{{\underline d}}$ and $f=(f_1,\cdots,f_h)\in
Rep(\underline d,\widehat A_h)$,
$$g\cdot f=(g_2f_1g_1^{-1},g_3f_2g_2^{-1},\cdots,g_1f_hg_h^{-1})$$
Let $$Z=\{(f_1,\cdots,f_h)\in Z\,|\,f_h\circ
f_{h-1}\circ\cdots\circ f_1:V_1\rightarrow V_1{\mathrm{\ is\
nilpotent}}\}$$

Clearly $Z$ is $GL_{{\underline d}}$-stable. 
Set $n=\sum d_i$, and let $\widehat{SL}_n$ be the affine type Kac-Moody group with Dynkin diagram $\widehat A_{n-1}$.
Lusztig (cf.\cite{gl}) has shown that an orbit closure in $Z$ is canonically isomorphic to an open subset of a Schubert variety in ${\widehat{SL}}_n/Q$, where $Q$ is the parabolic subgroup of ${\widehat{SL}}_n$ corresponding to omitting the simple roots $\alpha_0,\alpha_{d_{1}}, \alpha_{d_{1}+d_{2}},\cdots,\alpha_{d_{1}+\cdots +d_{h-1}}$.
\\
\\
Let now $h=2$ and consider the subvariety $Z_0$ of $Z$ given by
$$Z_0=\{(f_1,f_2)\in \operatorname{Hom}(V_1,V_2)\times\operatorname{Hom}(V_2,V_1)\,|\,f_2\circ f_1=0,f_1\circ f_2=0\}$$
with the dimension vector $(d_1,d_2)$.
Strickland (cf. \cite{strickland}) has shown that the irreducible components of $Z_0$ give the conormal varieties of the determinantal varieties in $\operatorname{Hom}(V_1,V_2)=\operatorname{Mat}_{d_2,d_1}(\mathbf k)$, the set of $d_2\times d_1$ matrices with entries in $\mathbf k$.
%
A determinantal variety in $\operatorname{Hom}(V_1,V_2)$ being canonically isomorphic to an open subset in a certain
Schubert variety in $Gr_{d_1,d_1+d_2}$ (the Grassmannian variety of $d_1$-dimensional subspaces of $\mathbf k^{d_1+d_2}$) (cf.\cite{gp2}),
the above two results of Lusztig and Strickland suggest a connection between conormal varieties to Schubert varieties in the (finite-dimensional) flag variety and affine Schubert varieties. 
\\
\\
Let $\gl=SL_n(\mathbf k)$.
We view the loop group $\gLhat=SL_n\left(\mathbf k[t,t^{-1}]\right)$ as a Kac-Moody group of type $\widehat A_{n-1}$.
Let $P$ be the parabolic subgroup of $G$ corresponding to omitting the simple root $\alpha_d$ for some $1\leq d\leq n-1$, and $\Para$ the parabolic subgroup of $\gLhat$ corresponding to omitting the simple roots $\alpha_0,\alpha_d$.
Lakshmibai (\cite{vl}) has shown that the cotangent bundle $T^*G/P$ is an open subset of a Schubert variety in $\gLhat/\Para$.
Let $w_0$ be the longest element in the Weyl group $W$ of $G$.
In \cite{rv}, we have shown that the conormal variety of the Schubert variety $X_P(w)$ is an open subset of a Schubert variety in $\gLhat/\Para$ if and only if the Schubert variety $X_P(w_0w)$ is smooth.
\\
\\
The approach adopted in \cite{vl,rv} seems to be quite successful in relating cotangent bundles and conormal varieties of classical Schubert varieties to affine Schubert varieties.
In this paper, for any parabolic subgroup $P$ of $G$, we first construct an embedding of the cotangent bundle $T^*G/P$ inside an affine Schubert variety $X(\kappa)$ (\Cref{injective}).
We then show that the conormal variety of a Schubert divisor in $G/P$ is an open subset of some affine Schubert subvariety of $X(\kappa)$ (\Cref{vk}).
In particular, we obtain that the conormal variety of a Schubert divisor is normal, Cohen-Macaulay, and Frobenius split (\Cref{frob}).
\\
\\
\newif\ifminuscule
\minusculefalse
Let $\gO=G\left(\mathbf k[t]\right)$ and let \Borel\ be a Borel subgroup of \gLhat\ contained in \gO. 
The action of of \gl\ on $V=\mathbf k^n$ defines an action of $\gLhat$ on $V[t,t^{-1}]$. 
The affine Grassmannian $\gLhat/\gOhat$ is an ind-variety whose points are the lattices with virtual dimension $0$ (see \Cref{vdim} for the definition of a lattice and its virtual dimension).
The affine flag variety $\gLhat/\Borel$ is an ind-variety whose points are affine flags $L_0\subset L_1\subset\ldots\subset L_{n-1}$ satisfying the incidence relation $tL_{n-1}\subset L_0$.
\\
\\
Let \Ni\ be the variety of $n\times n$ nilpotent matrices on which \gl\ acts by conjugation. 
In \cite{gl:green}, Lusztig constructs a \gl-equivariant embedding $\psi:\Ni\hookrightarrow\gLhat/\gO$ which takes \gl-orbit closures in \Ni\ to \gl-stable Schubert varieties in $\gLhat/\gO$.
Let $\boldsymbol\theta$ be the Springer resolution $T^*G/B\rightarrow\Ni$ given by $(g,X)\mapsto gXg^{-1}$.
In \cite{crv} Lakshmibai et al. construct a lift of $\psi$, namely a map $\eta$ embedding $T^*\gl/\borel$ into the affine flag variety $\gLhat/\Borel$, leading to the following commutative diagram:\begin{center}
\includegraphics{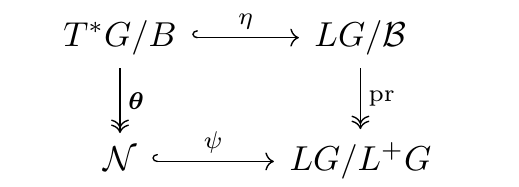}
\end{center}
where $\operatorname{pr}$ is the projection map. 
Under the identifications of the previous paragraph, it takes the affine flag $L_0\subset\ldots\subset L_{n-1}$ to the lattice $L_0$.
In this paper, for any parabolic subgroup $P$ of $G$, we define a generalization $\phi_P$, as described below,
of the map $\eta$, and then study the conormal variety of a Schubert divisor by identifying its image under $\phi_P$.
\\
\\
Let $P$ be a parabolic subgroup of $G$ corresponding to some subset $S_P\subset S_0$. 
We denote by \Para\ he parabolic subgroup of \gLhat\ corresponding to $S_P\subset S=\simple\sqcup\{\alpha_0\}$.
Let \u\ be the Lie algebra of the unipotent radical of $P$.
Using the identification  $T^*\gl/\para=\gl\times^\para\u$ (see \Cref{defCotan}), we define in \Cref{defnPhi} the map $\phi_\para:T^*G/P\hookrightarrow\mathcal G/\Para$ by $\phi_\para(g,X)=g\left(1-t^{-1}X\right)(\mathrm{mod}\,\Para)$, which sits in the following commutative diagram (see \Cref{springer}):
\begin{center}
\includegraphics{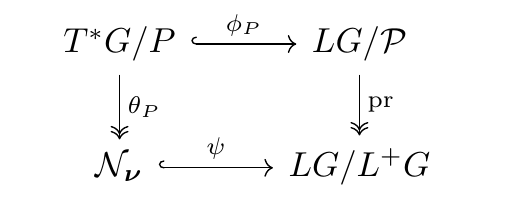}
\end{center}
\vspace{-2pt}
The variety $\Ni_{\boldsymbol\nu}$ is the closure of a \gl-orbit in \Ni. 
The map $\theta_P:T^*G/P\rightarrow\Ni_{\boldsymbol\nu}$, given by $(g,X)\mapsto gXg^{-1}$ is a resolution of $\Ni_{\boldsymbol\nu}$ (cf. \cite{fu}), and $\operatorname{pr}$ is the quotient map.
The closure of $\image(\phi_\para)$ in $X_\Para(\kappa)$ is a \gl-stable compactification of $T^*\gl/\para$.
We also identify the $\kappa\in\What^\Para$ (see \Cref{formula:kappa} and \Cref{injective}) for which $\image{\phi_\para}\subset X_\Para(\kappa)$, and further, $\kappa$ is minimal for this property.
Finally, given a Schubert divisor $X_P(w)\subset G/P$, we identify a Schubert variety $X_\Para(v)$ which is a compactification of the conormal variety $N^* X_P(w)$ (see \Cref{vk}).
\\
\\
When \para\ is maximal, $\phi_\para$ is the same as the map in \cite{vl}, and gives a dense embedding of $T^*G/P$ , i.e., $X_\Para(\kappa)$ is a compactification of $T^*\gl/\para$.
Mirkovi\'c and Vybornov \cite{mv} have constructed another lift of Lusztig's embedding along the Springer resolution, which we briefly discuss in \Cref{mv}. 
Their lift is in general different, but agrees with $\phi_P$ exactly when $P$ is maximal.
The reader is cautioned that Mirkovi\'c and Vybornov work with a different choice of Lusztig's map given by $X\mapsto (1-t^{-1}X)^{-1}(\mod\gOhat)$.
This difference is simply a matter of convention. 
\\
\\
The paper is organized as follows.
In \S2, we discuss some generalities on loop groups, and the associated root system and Weyl group.
In \S3, we define the embedding $\phi_P:T^*G/P\hookrightarrow\mathcal G/\Para$, relate it to Lusztig's embedding $\psi:\mathcal N\rightarrow\mathcal G/\gOhat$ via the Springer resolution, and discuss the relationship between $\phi_P$ and the map $\widetilde\psi$ of \cite{mv}.
In \S4, we identify the minimal $\kappa\in\What$ for which $\phi_P(T^*G/P)\subset X_\Para(\kappa)$.
In particular, $\phi_P$ identifies a \gl-stable subvariety of $X_\Para(\kappa)$ as a compactification of $T^*G/P$.
We also compute the dimension of $X_\Para(\kappa)$.
This allows us to recover the result of \cite{vl} and further show that $X_\Para(\kappa)$ is a compactification of $T^*G/P$ if and only if $P$ is a maximal parabolic subgroup.
Finally, in \S6, we apply $\phi_P$ to the conormal variety $N^* X_P(w)$ of a Schubert divisor $X_P(w)$ in $G/P$ to show that $N^* X_P(w)$ can be embedded as an open subset of an affine Schubert variety.

\def\gln #1.{\ensuremath{SL_n\left(\mathbf #1\right)}}
\section{The Loop Group}
Let $\mathbf k$ be an algebraically closed field.
In this section, we discuss some standard results on the root system, Weyl groups, and the Bruhat decomposition of the loop group $\gLhat=SL_n\left(\mathbf k[t,t^{-1}]\right)$.
For further details, the reader may refer to \cite{vk,sk,remy}.

\subsection{The Loop Group and its Dynkin Diagram}
Let \gl\ be the special linear group $\gln k .$.
The subgroup \borel\ (resp. $\borel^-$) of upper (resp. lower) triangular matrices in \gl\ is a Borel subgroup of \gl, and the subgroup \torus\ of diagonal matrices is a maximal torus in \gl. 
The root system of \gl\ with respect to $(\borel,\torus)$ has Dynkin diagram $A_{n-1}$.
We use the standard labeling of simple roots \simple\ 
\dynkAn
Let $\roots$ be the set of roots and $\proots$ the set of positive roots.
Let $L^\pm\gl=\gln k\left[t^{\pm 1}\right].$  and $\pi_\pm:L^\pm\gl\rightarrow\gl$ the surjective map given by $t^{\pm 1}\mapsto 0$. 
Then $\Borel^\pm:=\pi_\pm^{-1}(\borel)$ are \emph{opposite} Borel subgroups of \gLhat. 
The loop group \gL\ is a (minimal) Kac-Moody group (cf. \cite{remy}) with torus $T$ and Dynkin diagram $\widehat A_{n-1}$.
\dynkAnHat
We denote the Borel subgroup $\Borel^+$ as just \Borel.
Parabolic subgroups containing \borel\ (resp. \Borel) in \gl\ (resp. \gLhat) correspond to subsets of \simple\ (resp. \Simple). 
For \para\ the parabolic subgroup in \gl\ corresponding to $S_\para\subset\simple$, the parabolic subgroup $\Para\subset\gLhat$ corresponding to $S_\para\subset\Simple$ is given by $\Para=\pi^{-1}(\para)$. 
In particular, the subgroup $\gOhat$ is the parabolic subgroup corresponding to $\simple\subset\Simple$.


\subsection{The Root system of $A_{n-1}$}
\label{rootSystemA}
Consider the vector space of $n\times n$ diagonal matrices, with basis $\left\{\E_{i,i}\mid 1\leq i\leq n\right\}$.
Writing $\left\langle\epsilon_i\mid 1\leq i\leq n\right\rangle$ for the basis dual to $\left\{\E_{i,i}\mid 1\leq i\leq n\right\}$, we can identify the set of roots $\roots$ of \gl\ as follows:
$$\roots=\left\{\langle\epsilon_i-\epsilon_j\rangle\mid 1\leq i\neq j\leq n\right\}$$
The positive root $\alpha_i+\ldots+\alpha_{j-1}$, where $i<j$, corresponds to $\epsilon_i-\epsilon_j$, and the negative root $-(\alpha_i+\ldots+\alpha_{j-1})$ corresponds to $\epsilon_j-\epsilon_i$.
For convenience, we shall denote the root $\epsilon_i-\epsilon_j$ by $(i,j)$, where $1\leq i\neq j\leq n$.
The action of $W(\cong S_n)$ on $\roots$ is given by $w(i,j)=(w(i),w(j))$.

\subsection{The Root system of $\widehat A_{n-1}$}
Let \Roots\ be the root system associated to the Dynkin diagram $\widehat A_{n-1}$.
We have 
$$\Roots=\left\{n\delta+\alpha\mid n\in\mathbb Z,\alpha\in\roots\right\}\bigsqcup\left\{n\delta\mid n\in\mathbb Z, n\neq0\right\}$$ 
where $\delta=\alpha_0+\theta$ is the basic imaginary root in \Roots, and $\theta=\alpha_1+\cdots+\alpha_{n-1}$ is the highest root in \proots.
The set of positive roots $\Delta^+$ has the following description:
$$\Delta^+=\{n\delta+\alpha\vert\, n>0,\alpha\in\roots\}\bigsqcup\proots\bigsqcup\{n\delta\vert\,n>0\}$$
Let $\sim$ be the equivalence relation on $\mathbb Z\times\mathbb Z$ given by \begin{align*}
\hspace{40pt}    (i,j)&\sim(i+kn,j+kn)&k\in\mathbb Z
\end{align*}
We identify \Roots\ with $\mathbb Z\times\mathbb Z/\sim$ as follows:\begin{align}\label{eq:zz}
\hspace{80pt}   k\delta &\mapsto(0,kn)  &   \\
    (i,j)+k\delta       &\mapsto(i,j+kn)&\text{for }(i,j)\in\roots\nonumber
\end{align}
Under this identification, $(i,j)\in\Roots^+$ if and only if $i<j$.

\subsection{The Weyl group}
\label{permPres}
Let $N$ be the normalizer of $T$ in \gl. 
We identify the Weyl group \What\ associated to $\widehat A_{n-1}$ with $N(\mathbf k[t,t^{-1}])/T$. 
Let $\E_{i,j}$ be the $n\times n$ matrix with $1$ in the $(i,j)$ position, and $0$ elsewhere.
The elements of $N\left(\mathbf k[t,t^{-1}]\right)$ are matrices of the form $\sum\limits_{1\leq i\leq n}t_i\E_{\sigma(i),i}$, where \begin{itemize}
    \item   $(t_i)$ is a collection of non-zero monomials in $t$ and $t^{-1}$.
    \item   $\sigma$ is a permutation of $\{1\ldots n\}$. 
    \item   $\operatorname{det}\left(\sum\limits_{i=1}^n t_i\E_{\sigma(i),i}\right)=1$.
\end{itemize}
Consider the homomorphism 
$N\left(\mathbf k[t,t^{-1}]\right)\longrightarrow GL_n\left(\mathbf k[t,t^{-1}]\right)$ given by \begin{align}\label{apm}
    \sum\limits_{i=1}^n t_i\E_{\sigma(i),i} \longmapsto\sum\limits_{i=1}^n t^{\ord(t_i)}\E_{\sigma(i),i}
\end{align}
The kernel of this map is \torus, and so we can identify \What\ with the group of $n\times n$ permutation matrices $M$, with each non-zero entry a power of $t$, and $\ord(\det M)=0$.
For $w\in\What$, we call the matrix corresponding to $w$ the \emph{affine permutation matrix} of $w$.

\subsection{Generators for \What}
\label{ltau}
We shall work with the set of generators for $\What$ given by $\{s_0, s_1,\ldots, s_{n-1}\}$, where $s_i, 0\le i\le n-1$ are the reflections with respect to $\alpha_i, 0\le i\le n-1$. 
Note that $\left\{\alpha_i\mid 1\le i\le n-1\right\}$ being the set of simple roots of $G$, the Weyl group $W$ of $G$ is the subgroup of \What\ generated by $s_1,\ldots,s_{n-1}$.
The affine permutation matrix of $w\in W$ is $\sum\E_{w(i),i}$.
The affine permutation matrix of $s_0$ is given by
$$\begin{pmatrix}
0&0&\cdots & t^{-1}\\
0&1&\cdots &0\\
\vdots & \vdots & \vdots & \vdots\\
0&\cdots &1 &0\\
t &0&0&0
\end{pmatrix}
$$
For $1\leq a<b\leq n$, the reflection with respect to the positive root $(a,b)$ is given by the affine permutation matrix \begin{align*}
    s_{(a,b)}&=\E_{a,b}+\E_{b,a}+\sum\limits_{\substack{1\leq i\leq n\\ i\neq a,b}} \E_{i,i} 
\end{align*}

\subsection{Decomposition of \What\ as a semi-direct product}
\label{decomposition}
Consider the element $s_\theta\in W$, reflection with respect to the root $\theta$. 
There exists (cf. \cite{sk},\S13.1.6) a group isomorphism $\widehat{W}\rightarrow W\ltimes\coroots$ given by \begin{align*}\hspace{30pt}    
\hspace{30pt}   s_i &\mapsto (s_i,0)                    &\text{ for }1\leq i\leq n-1\\
                s_0 &\mapsto (s_\theta,-\theta^{\vee})  &
\end{align*} 
where \coroots\ is the coroot lattice 
and $\theta^\vee=\alpha_1^\vee+\cdots+\alpha_{n-1}^\vee$.
The simple coroot $\alpha_i^\vee\in\cartan,\, 1\leq i\leq n-1$, is given by the matrix $\E_{i,i}-\E_{i+1,i+1}$.
As shown in \cite{crv}, Section 2.4, a lift to $N(\mathbf k[t,t^{-1}])$ of $(\operatorname{id},\alpha_i^\vee)\in\What$, for $1\leq i\leq n-1$ is given by 
$$\tau_{\alpha_i^\vee}=\sum\limits_{k\neq i,i+1}\E_{k,k}+t^{-1}E_{i,i}+tE_{i,i}$$
For $q\in\coroots$, we will write $\tau_q$ for the image of $(\operatorname{id},q)\in W\ltimes\coroots$ in $\widehat W$.
Observe that for $\alpha\in\roots,\,q\in\coroots$ such that $\alpha=(a,b)$ and $\tau_q=\sum t_i\E_{ii}$, we have
\begin{align}\label{actionOfq}
    \alpha(q)=\ord(t_b)-\ord(t_a)
\end{align}
The action of $\tau_q$ on $\Delta$ is given by $\tau_q(\delta)=\delta$ and \begin{align}
\label{actionOfTau}
\hspace{68pt}   \tau_q(\alpha)  &=\alpha-\alpha(q)\delta&\text{ for }\alpha\in\roots
\end{align} 
In particular, for $\alpha\in\proots$, $\tau_q(\alpha)>0$ if and only if $\alpha(q)\leq 0$.
\begin{cor}
\label{count}
For $\alpha\in\proots$, $\tau_qs_\alpha>\tau_q$ if and only if $\alpha(q)\leq 0$ and $s_\alpha\tau_q>\tau_q$ if and only if $\alpha(q)\geq 0$ .
\end{cor}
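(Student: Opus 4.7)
The plan is to reduce both statements to the standard reflection characterization in Coxeter groups, namely that for a positive root $\alpha$ and any Weyl group element $w$, we have $ws_\alpha > w$ (equivalently, $\ell(ws_\alpha) > \ell(w)$) if and only if $w(\alpha)$ is a positive root. Applying this with $w=\tau_q$ turns the first assertion into the question of when $\tau_q(\alpha)\in\Delta^+$.

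First I would handle the claim about $\tau_q s_\alpha$. By \eqref{actionOfTau} we have $\tau_q(\alpha)=\alpha-\alpha(q)\delta$. Writing $\alpha=(i,j)$ with $1\leq i<j\leq n$ and using the identification \eqref{eq:zz}, the element $\alpha-\alpha(q)\delta$ corresponds to the class of $(i,\,j-n\alpha(q))\in\mathbb Z\times\mathbb Z/\sim$, which is positive if and only if $i<j-n\alpha(q)$. Since $1\leq i<j\leq n$, one has $j-i<n$, so $j-n\alpha(q)>i$ precisely when $\alpha(q)\leq 0$: if $\alpha(q)\leq 0$ then $j-n\alpha(q)\geq j>i$, while if $\alpha(q)\geq 1$ then $j-n\alpha(q)\leq j-n\leq 0<i$. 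Combining with the reflection characterization gives $\tau_q s_\alpha>\tau_q$ iff $\alpha(q)\leq 0$.

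For the second assertion, the clean move is to take inverses: $s_\alpha\tau_q>\tau_q$ in Bruhat order if and only if $(s_\alpha\tau_q)^{-1}>\tau_q^{-1}$, i.e.\ $\tau_q^{-1}s_\alpha>\tau_q^{-1}$. Since $\tau_q^{-1}=\tau_{-q}$ under the identification of \Cref{decomposition}, the first part applied to $-q$ yields $\tau_{-q}s_\alpha>\tau_{-q}$ iff $\alpha(-q)\leq 0$, i.e.\ iff $\alpha(q)\geq 0$, which is the desired equivalence.

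The only real content is the explicit verification that $\tau_q(\alpha)>0$ iff $\alpha(q)\leq 0$, which is a short computation in the $\mathbb Z\times\mathbb Z/\sim$ model; I do not expect any genuine obstacle, since the reflection characterization in Coxeter groups is standard and the inverse trick handles the left-multiplication case at no cost.
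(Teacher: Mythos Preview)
Your proof is correct and essentially matches the paper's argument: both reduce to the standard Coxeter criterion $ws_\alpha>w\iff w(\alpha)>0$, combined with the already-stated fact that $\tau_q(\alpha)>0$ iff $\alpha(q)\le 0$. The only cosmetic difference is that for the left-multiplication case the paper invokes the companion criterion $s_\alpha w>w\iff w^{-1}(\alpha)>0$ directly, whereas you obtain it via the inverse trick $\tau_q^{-1}=\tau_{-q}$; these are equivalent one-liners.
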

\begin{proof}
Follows from the equivalences (cf. \cite{sk}) $ws_\alpha>w\iff w(\alpha)>0$ and $s_\alpha w>w\iff w^{-1}(\alpha)>0$ applied to $w=\tau_q$.
\end{proof}

\subsection{The Coxeter Length}
\label{formula:length}
Let $w\in\What$ be given by the affine permutation matrix $w=t^{c_i}E_{\sigma(i),i}$, i.e. $w(i)=\sigma(i)-c_i n$. 
We have the formula:\begin{align}\label{eq:length} 
    l(w)=\sum\limits_{1\leq i<j\leq n} \left\lvert c_i-c_j-f_\sigma(i,j)\right\rvert
\end{align}
where $f_\sigma(i,j)=\begin{cases} 0&\text{ if }\sigma(i)<\sigma(j)\\1&\text{ otherwise}\end{cases}$.
\begin{proof}
We use \Cref{eq:zz} to write $\Roots^+=\left\{(i,j)\mid 1\leq i\leq n,\,i<j\right\}$.
We know from \cite{sk} that $l(w)=\#\left\{\alpha\in\Roots^+\mid w(\alpha)<0\right\}$                 
\begin{align*}
        &=\#\left\{(i,j)\mid 1\leq i\leq n,\,i<j,\,w(i)>w(j)\right\}        \\
        &=\sum\limits_{1\leq i\leq n}\sum\limits_{1\leq j\leq n}\#\left\{(i,j')\mid j'=j\ \mod n,\,i<j',\,w(i)>w(j')\right\} \\
        &=\sum\limits_{1\leq i<j\leq n}\#\left\{(i,j+kn)\mid k\geq 0,\,w(i)>w(j+kn)\right\}+\#\left\{(j,i+kn)\mid k\geq 1,\,w(j)>w(i+kn)\right\}\\
        &=\sum\limits_{1\leq i<j\leq n}\#\left\{k\mid 0\leq k<\frac{\sigma(i)-\sigma(j)}{n} +c_j-c_i\right\}+\#\left\{k\mid 1\leq k<\frac{\sigma(j)-\sigma(i)}{n}-c_j+c_i\right\}\\
        &=\sum\limits_{1\leq i<j\leq n} \left\lvert c_i-c_j-f_\sigma(i,j)\right\rvert
\end{align*}
\end{proof}

\subsection{The Bruhat Decompostion}
The Bruhat decomposition of $\gL$ is given by $\gL=\bigsqcup\limits_{w\in\What}\Borel w\Borel$.
For $w\in\What$, let $X_\Borel(w)\subset\gL/\Borel$ be the {\em affine Schubert variety}:\vskip -4pt
$$    X_\Borel(w)=\overline{\Borel w\Borel}(\mod\Borel)=\bigsqcup\limits_{v\leq w}\Borel v\Borel(\mod\Borel)$$
The Bruhat order $\leq$ on \What\ reflects inclusion of Schubert varieties, i.e., $v\leq w$ if and only if $X_\Borel(v)\subseteq X_\Borel(w)$. 
\\
\\
Let $\What^\Para\subset\What$ be the set of minimal (in the Bruhat order) representatives of $\What/\What_\Para$.
The Bruhat decomposition with respect to \Para\ is given by $$\gLhat=\bigsqcup\limits_{w\in\What^\Para}\Borelhat w\Para$$
For $w\in\What^\Para$, the {\em affine Schubert variety} $X_\Para(w)\subset\gLhat/\Para$ is given by
$$    X_\Para(w)=\overline{\Borelhat w\Para}(\mod\Para)=\bigsqcup\limits_{\substack{v\leq w\\ v,w\in\What^\Para}}\Borelhat w\Para(\mod\Para)$$
For $w\in\What^\Para$, $X_\Para(w)$ is a normal (cf. \cite{gf}), projective variety of dimension $l(w)$, the length of $w$. 

\begin{prop}
\label{table}
Suppose $w\in\What$ is given by the affine permutation matrix 
$$\sum t_i\E_{\sigma(i),i}=\sum \E_{\sigma(i),i}\sum t_i\E_{i,i}=\sigma\tau_q$$ 
with $\sigma\in S_n$ and $\tau_q=\sum t_i\E_{i,i}\in\coroots$.
Let $1\leq a<b\leq n$ and set $s_r=s_{(a,b)}$, $s_l=\sigma s_r\sigma^{-1}=s_{(\sigma(a),\sigma(b))}$. 
\begin{description}
    \item[Case 1] Suppose $\ord(t_a)=\ord(t_b)$. 
        Then $s_lw=ws_r$. 
        The set $\{w,s_lw\}$ has a unique minimal element $u$, given by $u=\begin{cases} w&\text{if }\ \sigma(a)<\sigma(b)\\ s_lw&\text{if }\ \sigma(a)>\sigma(b)\end{cases}$. 
    \item[Case 2] Suppose $\ord(t_a)\neq\ord(t_b)$. 
        Then $s_lw\neq ws_r$, and the set $\{w,s_lw,ws_r,s_lws_r\}$ has a unique minimal element $u$, given by $u=\begin{cases} s_lws_r&\text{if }\ \ord(t_a)<\ord(t_b)\\ w&\text{if }\ \ord(t_a)>\ord(t_b)\end{cases}$.
        Further, we have $u<s_lu<s_lus_r$ and $u<us_r<s_lus_r$. 
\end{description}
\end{prop}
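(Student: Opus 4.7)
The plan is to convert every Bruhat comparison into a sign computation on affine roots, using the criterion that $s_\alpha v<v\iff v^{-1}(\alpha)<0$ and $vs_\alpha<v\iff v(\alpha)<0$ for every positive real root $\alpha$. First I would rewrite $s_l w$, $ws_r$, and $s_l ws_r$ in the $\sigma'\tau_{q'}$ form by direct matrix multiplication: left multiplication by $s_l$ swaps rows $\sigma(a),\sigma(b)$, so $s_l w=(\sigma s_r)\tau_q$; right multiplication by $s_r$ swaps columns $a,b$ and interchanges $t_a\leftrightarrow t_b$, so $ws_r=(\sigma s_r)\tau_{s_r(q)}$; composing, $s_l ws_r=\sigma\tau_{s_r(q)}$. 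Thus $s_l w=ws_r$ is equivalent to $q=s_r(q)$, which by \Cref{actionOfq} means $\ord(t_a)=\ord(t_b)$, splitting the analysis into Cases~1 and~2.

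In Case~1 only $\{w,s_l w\}$ is relevant. Writing $\alpha_l$ for the positive root underlying $s_l$, one has $\sigma^{-1}(\alpha_l)=\pm(a,b)$ with sign $+$ iff $\sigma(a)<\sigma(b)$; applying \Cref{actionOfTau} yields $w^{-1}(\alpha_l)=\pm\bigl((a,b)+(c_b-c_a)\delta\bigr)$ where $c_i:=\ord(t_i)$. The $\delta$-term vanishes in Case~1, so $\operatorname{sign}(w^{-1}(\alpha_l))=\operatorname{sign}(\sigma(b)-\sigma(a))$, giving the desired minimum.

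For Case~2 the same recipe produces four ``edge'' roots $w^{-1}(\alpha_l),\,w(\alpha_r),\,(s_l w)(\alpha_r),\,(ws_r)^{-1}(\alpha_l)$. Under the identification of \Cref{eq:zz} each has the form $(i,j+kn)$ with $|i-j|\leq n-1$ and $|k|\geq 1$ (since $\ord(t_a)\ne\ord(t_b)$), so the arithmetic dominance $|kn|\geq n>|i-j|$ forces the sign to coincide with $\operatorname{sign}(k)$, which by \Cref{actionOfq} is controlled by $\operatorname{sign}(\ord(t_a)-\ord(t_b))$. Unwinding the signs places the four elements into a Bruhat square: when $\ord(t_a)>\ord(t_b)$ both edges out of $w$ go up and both edges into $s_l ws_r$ go up, so $u=w$ is the unique minimum and $s_l ws_r$ the unique maximum; when $\ord(t_a)<\ord(t_b)$ the orientation reverses and $u=s_l ws_r$. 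The chains $u<s_l u<s_l us_r$ and $u<us_r<s_l us_r$ then read off from the individual pairwise comparisons.

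The main obstacle is the sign bookkeeping in Case~2: four distinct affine roots must be computed via \Cref{actionOfTau}, and the ``large'' term $|c_a-c_b|\cdot n$ must be verified to dominate the bounded integer $|\sigma(a)-\sigma(b)|$ along each edge, which is exactly the arithmetic input that collapses every sub-case into the clean $\operatorname{sign}(\ord(t_a)-\ord(t_b))$ dichotomy.
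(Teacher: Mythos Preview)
Your proposal is correct and follows essentially the same approach as the paper: both arguments reduce each Bruhat comparison to the sign of $w'(\alpha)$ or $w'^{-1}(\alpha)$ for the relevant positive root, and both exploit that in Case~2 the $k\delta$-term (equivalently, the $|kn|>|i-j|$ inequality you state) determines positivity regardless of the finite part. The only cosmetic difference is that you derive $s_lw=ws_r\iff\ord(t_a)=\ord(t_b)$ by writing out the $\sigma'\tau_{q'}$ form via row/column swaps, whereas the paper obtains it from $\tau_q(a,b)=(a,b)$ directly; the content is identical.
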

\begin{proof}
Recall from \Cref{actionOfq} that $q(a,b)=\ord(t_b)-\ord(t_a)$. 
Suppose first that $\ord(t_a)=\ord(t_b)$, which from \Cref{actionOfTau} is equivalent to $\tau_q(a,b)=(a,b)$.
It follows that 
$$ws_r=\sigma\tau_q s_{(a,b)}=\sigma s_{(a,b)}\tau_q= s_{(\sigma(a),\sigma(b)}\sigma\tau_q=s_lw$$
Recall that for $\alpha\in\proots$, we have $ws_\alpha>w$ if and only if $w(\alpha)>0$.
Now, $w(a,b)=\sigma\tau_q(a,b)=\sigma(a,b)=(\sigma(a),\sigma(b))$ being positive if and only if $\sigma(a)<\sigma(b)$, it follows that $w<ws_r$ if and only if $\sigma(a)<\sigma(b)$.
\\
\\
Suppose now that $\ord(t_a)\neq\ord(t_b)$, and set $k=\ord(t_a)-\ord(t_b)$.
Then 
$$w(a,b)=\sigma\tau_q(a,b)=(\sigma(a),\sigma(b))+k\delta$$ 
$$w^{-1}(\sigma(a),\sigma(b))=\tau_q\sigma^{-1}(\sigma(a),\sigma(b))=\tau_q(a,b)=(a,b)+k\delta$$
It follows that $w(a,b)$ and $w^{-1}(\sigma(a),\sigma(b))$ are positive if $k>0$ and negative if $k<0$.
Accordingly, we have $w<ws_r$, $w<s_lw$ if $\ord(t_a)>\ord(t_b)$, and $w>ws_r, w>s_lw$ if $\ord(t_a)<\ord(t_b)$.
Similarly, it follows from
$$s_lw(a,b)=s_l\sigma\tau_q(a,b)=s_l(\sigma(a),\sigma(b))+k\delta=(\sigma(b),\sigma(a))+k\delta$$ 
that $s_lw<s_lws_r$ if $\ord(t_a)>\ord(t_b)$ and $s_lw>s_lws_r$ if $\ord(t_a)<\ord(t_b)$.
Finally,
$$(ws_r)^{-1}(\sigma(a),\sigma(b))=s_r\tau_q^{-1}\sigma^{-1}(\sigma(a),\sigma(b))=s_r\tau_q^{-1}(a,b)=s_r(a,b)-k\delta=(b,a)-k\delta$$
implies $s_lws_r<ws_r$ if $\ord(t_a)<\ord(t_b)$ and $s_lws_r>ws_r$ if $\ord(t_a)>\ord(t_b)$.
\end{proof}

\section{Lusztig's Embedding and the Springer Resolution}
In this section, we define the embedding $\phi_P:T^*G/P\hookrightarrow\gL/\Para$ and relate it to Lusztig's embedding $\psi:\mathcal N\rightarrow\gL/\gOhat$ via the Springer resolution.
We also discuss Mirkovi\'c and Vybornov's (\cite{mv}) compactification of $T^*G/P$ and show how it relates to the map $\phi_P$.
\\
\\
Let $V$ be the vector space $\mathbf k^n$ with standard basis $\left\{\e_i\middle\vert\ 1\leq i\leq n\right\}$, and let the group $\gl=SL_n(\mathbf k)$ act on $V$ in the usual way. 
Fix a sequence $0=\Lambda_0<\Lambda_1<\ldots<\Lambda_{r-1}<\Lambda_r=n$, and let $P\supset B$ be the parabolic subgroup corresponding to the set of roots $\Simple_\para=\simple\backslash\left\{\alpha_{\Lambda_i}\mid 1\leq i<r\right\}$ in \simple.
Let $\lambda_i=\Lambda_i-\Lambda_{i-1}$ and $\boldsymbol\lambda=(\lambda_1,\ldots,\lambda_r)$.
We denote by \Para\ the parabolic subgroup $\Borel\subset\Para\subset\gLhat$ corresponding to $\Simple_\para\subset\Simple$. 

\subsection{Partitions}
A partition $\boldsymbol\mu$ of $n$ is a decreasing sequence $(\mu_1,\mu_2,\ldots,\mu_r)$ of positive integers such that $\sum\mu_i=n$.
Given $\boldsymbol\mu=(\mu_1,\ldots,\mu_r)$, we sometimes view $\boldsymbol\mu$ as an infinite non-negative sequence by defining $\mu_i=0$ for $i>r$.
Let $\mathcal Par$ denote the set of partitions of $n$.
The \emph{dominance order} $\preceq$ on $\mathcal Par$ is given by \begin{align*}
\hspace{80pt}\boldsymbol\mu\preceq\boldsymbol\nu&\iff\sum\limits_{j\leq i}\mu_j\leq\sum\limits_{j\leq i}\nu_j&\forall i\in\mathbb Z
\end{align*}

\subsection{The Nilpotent Cone}
Let \Ni\ be the variety of nilpotent $n\times n$ matrices, and let \gl\ act on \Ni\ by conjugation.
The variety \Ni\ has the \gl-orbit decomposition $\Ni=\bigsqcup\limits_{\boldsymbol\nu\in\mathcal Par}\Ni^\circ_{\boldsymbol\nu}$, where $\Ni_{\boldsymbol\nu}^\circ$ is the set of nilpotent matrices of Jordan type $\boldsymbol\nu$.
We write $\Ni_{\boldsymbol\nu}$ for the closure (in \Ni) of $\Ni_{\boldsymbol\nu}^\circ$.
Then $\Ni_{\boldsymbol\mu}\subset\Ni_{\boldsymbol\nu}$ if and only if $\boldsymbol\mu\preceq\boldsymbol\nu$.

\subsection{Lusztig's Embedding}
Let $\boldsymbol\nu=(\nu_1,\ldots,\nu_s)$ be the partition of $n$ which is conjugate to the partition of $n$ obtained from $\boldsymbol\lambda$ by rearranging the $\lambda_i$ in decreasing order.
In particular, $r=\nu_1$ and $s=\max\left\{\lambda_i\mid 1\leq i\leq r\right\}$.
Consider the element $\tau_\betac\in\What$ given by the affine permutation matrix\begin{align}
\label{betac}
    \tau_\betac=\sum\limits_{i=1}^st^{\nu_i-1}\E_{i,i}+\sum\limits_{i=s+1}^n t^{-1}\E_{i,i}
\end{align} 

There exists a \gl-equivariant injective map $\psi:\Ni_{\boldsymbol\nu}\hookrightarrow X_{\gOhat}(\tau_\betac)$ given by\begin{align}
\label{defn:psi}
\hspace{47pt}    \psi\left(X\right)&=\left(1-t^{-1}X\right)(\mod\gOhat)  &\text{for }X\in\Ni_{\boldsymbol\nu}
\end{align}
The map ${\psi}{\vert}{\Ni_{\boldsymbol\nu}}$ is an open immersion onto the \emph{opposite Bruhat cell} 
$$Y_{\gOhat}(\tau_q):=\Borel^-/\gO\bigcap X_{\gOhat}(\tau_q)$$ 
where $\Borel^-/\gO$ denotes the image of $\Borel^-$ under the map $\gL\rightarrow\gL/\gO$.
This statement is well-known.
A proof of can be found in \S4.1 of \cite{ah}.
A variant of the map $\psi$ was first introduced in \cite{gl:green}.
\ifminuscule
Observe that $\psi$ is \gl-equivariant:\begin{align*}
    \psi\left(gNg^{-1}\right)  &=\left(1-t^{-1}gNg^{-1}\right)(\mod\gOhat)\\
                    &=g\left(1-t^{-1}N\right)g^{-1}(\mod\gOhat)\\
                    &=g\left(1-t^{-1}N\right)(\mod\gOhat)
\end{align*}

\begin{lemma}
[cf. \cite{gl:green}]
The map $\psi$ is injective.
\end{lemma}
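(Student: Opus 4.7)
The plan is to exploit the nilpotence of the matrices in $\Ni_{\boldsymbol\nu}$ to show that the map $X\mapsto 1-t^{-1}X$ already separates points after passage to the quotient $\gL/\gOhat$; concretely, two such elements define the same coset exactly when the matrix $X$ can be recovered as (minus) the coefficient of $t^{-1}$ in a well-defined inverse, which forces $X=Y$.

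First I would fix $X,Y\in\Ni_{\boldsymbol\nu}$ with $\psi(X)=\psi(Y)$, so that $(1-t^{-1}Y)^{-1}(1-t^{-1}X)$ lies in $\gOhat=SL_n(\mathbf k[t])$, in particular has entries in $\mathbf k[t]$. Next, using that $Y$ is nilpotent, I would write down the explicit inverse
\[
(1-t^{-1}Y)^{-1}=\sum_{k\geq 0}t^{-k}Y^{k},
\]
which is a finite sum (it terminates at $k=n-1$ since $Y^{n}=0$) and therefore a genuine element of $SL_n(\mathbf k[t,t^{-1}])$.

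Then I would multiply out:
\[
(1-t^{-1}Y)^{-1}(1-t^{-1}X)=1+\sum_{k\geq 1}t^{-k}\bigl(Y^{k}-Y^{k-1}X\bigr).
\]
For the right-hand side to have all entries in $\mathbf k[t]$, every coefficient of $t^{-k}$ for $k\geq 1$ must vanish, giving the relations $Y^{k}=Y^{k-1}X$ for all $k\geq 1$. Taking $k=1$ already yields $X=Y$, proving injectivity of $\psi$.

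The computation is entirely elementary, so there is no real obstacle; the only point to be careful about is justifying that $(1-t^{-1}Y)^{-1}$ lies in the loop group (where the nilpotence of $Y$ is essential, since otherwise the inverse would be a formal power series rather than a Laurent polynomial) and that $\gOhat$-cosets can indeed be tested by multiplying on the left by $(1-t^{-1}Y)^{-1}$ on the level of $\gL$.
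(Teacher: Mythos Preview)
Your argument is correct and essentially identical to the paper's own proof: both expand $(1-t^{-1}Y)^{-1}$ as the finite geometric series $\sum_{k\geq 0}t^{-k}Y^{k}$ and observe that the product $(1-t^{-1}Y)^{-1}(1-t^{-1}X)$, being simultaneously in $\gOhat$ and of the form $1+(\text{negative powers of }t)$, must equal $1$. The only cosmetic difference is that you extract the $t^{-1}$-coefficient explicitly, whereas the paper phrases the same conclusion as ``integral, and so must equal $1$''.
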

\begin{proof}
Suppose $\psi\left(N\right)=\psi\left(N_1\right)$, i.e., $\left(1-t^{-1}N\right)=\left(1-t^{-1}N_1\right)(\mod\gOhat)$. 
Then $$\left(1-t^{-1}N\right)^{-1}\left(1-t^{-1}N_1\right)=\left(1+t^{-1}N+t^{-2}N^2+\cdots\right)\left(1-t^{-1}N_1\right)\in\gOhat$$
In particular, $\left(1-t^{-1}N\right)^{-1}\left(1-t^{-1}N_1\right)$ is integral, and so must equal $1$. 
It follows that $N=N_1$.
\end{proof}
\fi

\subsection{The Cotangent Bundle}
\label{defCotan}
We identify the points of the variety $G/P$ with the partial flags in $V(=\mathbf k^n)$ of shape $\boldsymbol\lambda$:
$$G/P\cong\left\{(F_0\subset F_1\subset\ldots\subset F_{r-1}\subset F_r)\mid\dim F_i/F_{i-1}=\lambda_i\right\}$$
Using the Killing form on $\mathfrak{sl}_n$, we can identify the cotangent space at identity $T^*_eG/P$ with the Lie algebra \u\ of the unipotent radical $U_P$ of \para.
Then $T^*\gl/\para$ is the fiber bundle over $\gl/\para$ associated to the principal \para-bundle $\gl\rightarrow\gl/\para$, for the adjoint action of \para\ on \u:\begin{align*}
    T^*\gl/\para=\gl\times^\para\u=\gl\times\u/\sim
\end{align*}
The equivalence relation $\sim$ is given by $(g,\Y)\sim(gp,p^{-1}\Y p)$, where $g\in\gl$, $\Y\in\u$, $p\in\para$.
\def\cot{\ensuremath{\widetilde\Ni_{\boldsymbol\lambda}}}
We also have the identification\begin{align}\label{defcotan2}
    T^*G/P  &=\left\{(X,F_0\subset\ldots\subset F_k)\mid X\in\Ni,\,\dim F_i/F_{i-1}=\lambda_i,\,X(F_i)\subset F_{i-1}\right\}
\end{align}
The two identifications are related via the isomorphism 
$$(g,X)\mapsto (gXg^{-1},gV_0\subset\ldots\subset gV_k)$$
where $(V_0\subset\ldots\subset V_r)$ is the flag of shape $\boldsymbol\lambda$ fixed by $P$.

\subsection{The Affine Grassmannian}
\label{vdim}
A \emph{lattice} in $L\subset V[t,t^{-1}]$ is a $\mathbf k[t]$ module satisfying $\mathbf k[t,t^{-1}]\tensor\limits_{\mathbf k[t]} L=V[t,t^{-1}]$.
The \emph{virtual dimension} of $L$ is defined as \vskip -4pt
$$\operatorname{vdim}(L):=\dim_{\mathbf k}(L/L\cap E)-\dim_{\mathbf k}(E/L\cap E)$$
where $E$ is the standard lattice, namely the $\mathbf k[t]$ span of $V$. 
The quotient $\gLhat/\gOhat$ is an ind-variety whose points are identified with the lattices of virtual dimension $0$ (cf. \cite{gf}).
$$\gLhat/\gOhat=\left\{L\text{ a lattice}\mid \operatorname{vdim}(L)=0\right\}$$ 

\subsection{Affine Flag Varieties}
A \emph{partial affine flag} of shape $\boldsymbol\lambda$ is a sequence of lattices $L_0\subset L_1\subset\ldots\subset L_r$ satisfying $tL_r=L_0$ and $\dim L_i/L_{i-1}=\lambda_i$ for $1\leq i\leq r$.
For $P$ and \Para\ as above, we can identify the points of the ind-variety $\gLhat/\Para$ with partial affine flags of shape $\boldsymbol\lambda$ satisfying the additional condition $\operatorname{vdim}(L_0)=0$ (cf. \cite{gf}).
\begin{align}\label{plfv}
\gLhat/\Para\cong\left\{(L_0\subset\ldots\subset L_r)\mid \dim L_i/L_{i-1}=\lambda_i,\,tL_r=L_0,\,\operatorname{vdim}(L_0)=0\right\}
\end{align}
The ind-variety $\gLhat/\Para$ is called the \emph{affine flag variety} associated to $\Para$.

\subsection{The map $\phi_\para$:}
\label{defnPhi}
Let $\phi_\para:\gl\times^\para\u\rightarrow\gLhat/\Para$ be defined by \begin{align*}
\hspace{40pt}    \phi_\para(g,X)&=g(1-t^{-1}X)(\mod\Para)  &g\in\gl,\ X\in\u
\end{align*}
For $g\in\gl,\,p\in\para$ and $X\in\u$, we have \begin{align*}
    \phi_\para\left(gp,p^{-1}X p\right) &=gp\left(1-t^{-1}p^{-1}Xp\right)(\mod\Para)   \\
                                        &=g\left(p-t^{-1}Xp\right)(\mod\Para)          \\
                                        &=g\left(1-t^{-1}X\right)(\mod\Para)            \\
                                        &=g\,\phi_\para\left(1,X\right)
\end{align*}
It follows that $\phi_\para$ is well-defined and \gl-equivariant.
Under the identifications of \Cref{defcotan2,plfv}, we have\begin{align*}
    \phi_P(X,F_0\subset F_1\subset\ldots\subset F_r)&=L_0\subset L_1\subset\ldots\subset L_r
\end{align*}
where $L_i=\left(1-t^{-1}X\right)\left(V[t]\oplus t^{-1}F_i\right)$.

\begin{lemma}
The map $\phi_\para$ is injective.
\end{lemma}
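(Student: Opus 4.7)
The plan is to suppose $\phi_\para(g_1,X_1)=\phi_\para(g_2,X_2)$ and then produce an element $p_0\in\para$ realizing the equivalence $(g_1,X_1)\sim(g_2,X_2)$ in $\gl\times^\para\u$. The assumed equality says that
$$
p:=(1-t^{-1}X_2)^{-1}\,g_2^{-1}g_1\,(1-t^{-1}X_1)\in\Para.
$$
Writing $h=g_2^{-1}g_1\in\gl$ and using nilpotence of $X_2$ to expand $(1-t^{-1}X_2)^{-1}=\sum_{k\ge 0}t^{-k}X_2^k$ (a finite sum), I would compute
$$
p=h+\sum_{k\ge 1}t^{-k}\bigl(X_2^{k}h-X_2^{k-1}hX_1\bigr).
$$

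Since $\Para\subset\gOhat=\gl(\mathbf k[t])$, the element $p$ must be polynomial in $t$; but the expansion above contains only non-positive powers of $t$, so every coefficient of $t^{-k}$ with $k\ge 1$ has to vanish. The $k=1$ case forces the intertwining relation $X_2h=hX_1$, and an immediate induction then gives $X_2^kh=hX_1^k$ for all $k$, so the higher coefficients vanish automatically. Consequently $p=h$ is a constant matrix in $\gl$, and since constants in $\Para=\pi_+^{-1}(\para)$ are precisely the elements of $\para$, we conclude $h\in\para$.

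Setting $p_0=h^{-1}\in\para$, one checks $g_1 p_0=g_1 h^{-1}=g_2$ and $p_0^{-1}X_1 p_0=hX_1h^{-1}=X_2$, so $(g_1,X_1)\sim(g_1p_0,p_0^{-1}X_1p_0)=(g_2,X_2)$ in $\gl\times^\para\u$, as needed. The main (mild) obstacle is unpacking the congruence modulo $\Para$: the integrality half of ``$p\in\Para$'' is what forces the commutator identity $X_2h=hX_1$, while the residual condition at $t=0$ places $h$ inside $\para$ rather than merely in $\gl$. Both uses of nilpotence are essential --- to terminate the geometric series and to ensure $hX_1h^{-1}\in\u$ once $h\in\para$ is known.
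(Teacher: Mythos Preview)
Your proof is correct and follows essentially the same route as the paper's: both arguments write the element of $\Para$ as $(1-t^{-1}X_2)^{-1}h(1-t^{-1}X_1)$, expand the inverse as a finite geometric series, use integrality (membership in $\gOhat$) to kill the negative-power terms, and then use $\Para\cap\gl=\para$ to place $h$ in $\para$. The only cosmetic difference is that the paper first substitutes $Y'=hX_1h^{-1}$ before expanding, whereas you expand directly and read off the intertwining relation $X_2h=hX_1$ from the $t^{-1}$ coefficient; the content is identical.
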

\begin{proof}
Suppose $\phi_\para(g,\Y)=\phi_\para(g_1,\Y_1)$, i.e., \begin{align*}
                &g\left(1-t^{-1}\Y\right)=g_1\left(1-t^{-1}\Y_1\right)(\mod\Para) \\
    \implies    &g\left(1-t^{-1}\Y\right)=g_1\left(1-t^{-1}\Y_1\right)x
\end{align*}
for some $x\in\Para$. 
Denoting $h=g_1^{-1}g$ and $\Y'=h\Y h^{-1}$, we have
 \begin{align*}
    h\left(1-t^{-1}\Y\right)&=\left(1-t^{-1}\Y_1\right)x\\
    \implies    x           &=\left(1-t^{-1}\Y_1\right)^{-1}h\left(1-t^{-1}\Y\right)\\
                            &=\left(1-t^{-1}\Y_1\right)^{-1}\left(1-t^{-1}\Y'\right)h\\
    \implies    xh^{-1}     &=\left(1+t^{-1}\Y_1+t^{-2}\Y_2+\cdots\right)\left(1-t^{-1}\Y'\right)
\end{align*}
Now since $x\in\Para,\,h\in\gl$, the left hand side is integral, i.e., does not involve negative powers of $t$.
Hence both sides must equal identity. 
It follows $x=h\in\para=\Para\bigcap\gl$ and
$                \Y_1=\Y'=h\Y h^{-1}$.
In particular, $\left(g_1,\Y_1\right)=\left(gh^{-1},h\Y h^{-1}\right)\sim\left(g,\Y\right)$ as required.
\end{proof}

\subsection{The Springer Resolution}
\label{springer}
Let $\boldsymbol\nu$ be the partition of $n$ which is conjugate to the partition of $n$ obtained from $\boldsymbol\lambda$ by rearranging the $\lambda_i$ in decreasing order.
The Springer map $\theta:T^*\gl/P\rightarrow\Ni$ given by $\theta\left(g,X\right)=gXg^{-1}$ where $g\in\gl,\, X\in\mathfrak n$ is a resolution of singularities for the \gl-orbit $\Ni_{\boldsymbol\nu}\subset\Ni$.
The maps $\phi_P$ and $\psi$ from \Cref{defnPhi,defn:psi} sit in the following commutative diagram:
\begin{center}
\includegraphics{diag2.pdf}
\end{center}
where $\operatorname{pr}:\gLhat/\Para\rightarrow\gLhat/\gOhat$ is the natural projection. 
We present a more precise version of this statement in \Cref{springer2}.

\subsection{The Mirkovi\'c-Vybornov Compactification}
\label{mv}
Consider the \emph{convolution Grassmannian} $\widetilde{\mathcal Gr}{}^{\boldsymbol\lambda}$ whose points are identified with certain lattice flags:
$$\widetilde{\mathcal Gr}{}^{\boldsymbol\lambda}=\left\{L_0\subset L_1\subset\ldots\subset L_r\mid L_r=t^{-1}V[t],\,\dim L_i/L_{i-1}=\lambda_i,\,tL_i\subset L_{i-1}\right\} $$
In \cite{mv}, Mirkovi\'c and Vybornov construct an embedding $\widetilde\psi:T^*G/P\hookrightarrow\widetilde{\mathcal Gr}{}^{\boldsymbol\lambda}$ given by \begin{align*}
    \widetilde\psi(X,F_0\subset\ldots\subset F_r)   &=L_0\subset\ldots\subset L_r
\end{align*}
where $L_i=(1-t^{-1}X)V[t]\oplus t^{-1}F_i$.
Once again, we have a commutative diagram\begin{center}
\includegraphics{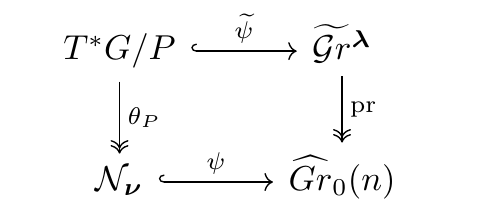}
\end{center}
where the map $\operatorname{pr}:\widetilde{\mathcal Gr}{}^{\boldsymbol\lambda}\rightarrow\mathcal Gr$ is given by $(L_0,L_1,\ldots,L_r)\mapsto L_0$.
As we can see, the incidence relations $tL_i\subset L_{i-1}$ are in general different from the incidence relations $tL_r=L_0$ of the partial affine flag variety; accordingly, $\widetilde{\mathcal Gr}{}^{\boldsymbol\lambda}$ is different from $\gLhat/\Para$.
However, when $P$ is maximal, i.e., $\boldsymbol\lambda=(d,n-d)$ for some $d$, we have an isomorphism $\beta:\widetilde{\mathcal Gr}{}^\lambda\xrightarrow\sim\gLhat/\Para$ given by 
$$(L_0\subset L_1\subset t^{-1}V[t])\mapsto (L_0\subset L_1\subset L_2)$$
where $L_2=t^{-1}L_0$.
In this case, one can verify that $\beta\circ\widetilde\psi=\phi_P$.

\section{The element $\kappa$}
\label{tableau}
Let $\boldsymbol\lambda=(\lambda_1,\ldots,\lambda_r)$, $P$, and \Para\ be as in the previous section.
Further, let $\boldsymbol\nu=(\nu_1,\ldots,\nu_s)$ be the partition of $n$ which is conjugate to the partition of $n$ obtained from $\boldsymbol\lambda$ by rearranging the $\lambda_i$ in decreasing order.
\\
\\
In this section, we describe the element $\kappa\in\What^\Para$ for which $\phi_P(T^*G/P)\subset X_\Para(\kappa)$; further, $\kappa$ is minimal for this property.
We compute $\dim X_\Para(\kappa)=l(\kappa)$ and show that when \para\ is maximal, $X_\Para(\kappa)$ is a compactification of $T^*G/P$.
\ifminuscule
\begin{figure}[h]
\centering
\begin{center}
\begin{tikzpicture}[scale=.35]\footnotesize
 \pgfmathsetmacro{\xone}{0}
 \pgfmathsetmacro{\xtwo}{ 16}
 \pgfmathsetmacro{\yone}{0}
 \pgfmathsetmacro{\ytwo}{16}

\begin{scope}<+->;
 grid
  \draw[step=1cm,gray,very thin] (\xone,\yone) grid (\xtwo,\ytwo);

\end{scope}

\begin{scope}[thick,red]
  \foreach \x in {5,8,14,15,16}
    \draw (0, 16) rectangle (5,11);
    \draw (5, 11) rectangle (8,8);
    \draw (8, 8) rectangle (10,6);
    \draw (10, 6) rectangle (11,5);
    \draw (11, 5) rectangle (14,2);
    \draw (14, 2) rectangle (16,0);
 
\draw[black] (2.5,13.5) node {\Large$\lambda_1$};
\draw[black] (6.5,9.5) node {\Large$\lambda_2$};
\draw[black] (9,7) node {\large$\bullet$};
\draw[black] (10.5,5.5) node {$\bullet$};
\draw[black] (12.5,3.5) node {\large$\bullet$};
\draw[black] (15,1) node {\large$\lambda_r$};
\end{scope}
\end{tikzpicture}
\end{center}
\caption{Elements of $\para$}
\label{diagramtofu}
\end{figure}
The elements of \para\ are exactly those matrices in \gl\ whose bottom left entries in \Cref{diagramtofu} are $0$.
Let $U$ be the unipotent radical of \para, and \u\ the Lie algebra of $U$. 
A matrix $X$ is in \u\ if and only if $X(V_i)\subset X(V_{i-1})$.
The algebra \u\ is nilpotent and contains exactly those matrices whose non-zero entries are confined to the top right corner of \Cref{diagramtofu}.
It is clear from the figure that \begin{align*}
\dim\gl/\para=\dim\u=\dfrac{n^2-\sum\lambda_i^2}{2}=\sum\limits_{i<j}\lambda_i\lambda_j
\end{align*}
\def\gln{\relax}%
\fi

\subsection{Tableaux}
\label{41}
We draw a left-aligned tableau with $r$ rows, with the $i^{th}$ row from top having $\lambda_i$ boxes. 
Fill the boxes of the tableau as follows: the entries of the $i^{th}$ row are the integers $k$ satisfying $d_i<k\leq d_{i+1}$, written in increasing order.
We denote by $\row(i)$ the set of entries in the $i^{th}$ row of the tableau.
Observe that the number of boxes in the $i^{th}$ column from the left is $\nu_i$. 
The Weyl group $W_\para$ is the set of elements in $S_n$ that preserve the partition $\left\{1,\ldots,n\right\}=\bigsqcup\limits_i\row(i)$.
\\
\\
We define a co-ordinate system $\fij\bullet.\bullet.$  on $\left\{1,\ldots,n\right\}$ as follows:
For $1\leq i\leq r$, $1\leq j\leq\nu_i$, let $\fij i.j.$ denote the $j^{th}$ entry (from the top) of the $i^{th}$ column. 
Note that $\fij i.j.$ need not be in $\row(j)$.
\\
\\ 
Finally, let $\f j,k.$ denote the elementary matrix $\E_{\fij i.j.,\fij i.k.}$.
Observe that $\fij i.b.=\fij j.c.$ if and only if $i=j$ and $b=c$.
In particular, \begin{align}\label{delta}
    \f a,b.F^j_{c,d}=\delta_{ij}\delta_{bc}\f a,d.
\end{align}

\subsection{\red\ and \blue}
\label{redb4blue}
We split the set $\left\{1,\ldots,n\right\}$ into disjoint subsets \red\ and \blue, depending on their positions in the tableau. 
Let $\set_1=\left\{\fij i.1.\mid 1\leq i\leq s\right\}$ be the set of entries which are topmost in their column.
We write $\set_1(i)=\set_1\bigcap\row(i)$.
For convenience, we also define $\set_2(i)=\row(i)\backslash\set_1(i)$ and $\set_2=\bigcup\limits_i\set_2(i)$.
\\
\\
The set $\red(i)$ is the collection of the $\#\set_1(i)$ smallest entries in $\row(i)$: 
$$\red(i):=\left\{j\mid\Lambda_{i-1}<j\leq\Lambda_i-\max\left\{\lambda_j\mid j<i\right\}\right\}$$
We set $\blue(i)=\row(i)\backslash\red(i)$, $\red=\bigcup\limits_i\red(i)$, and $\blue=\bigcup\limits_i\blue(i)$.
The elements of $\red(i)$ are smaller than the elements of $\blue(i)$.
\\
\\
The elements of \red, arranged in increasing order are written $l(1),\ldots,l(s)$.
We enumerate the elements of $\blue$, written row by row from bottom to top, each row written left to right, as $m(1),\ldots,m(n-s)$.

\ifdraft\marginpar{example.tex}\fi
    \def\ten{10}
    \def\eleven{11}
    \def\eleven{11}
    \def\twelve{12}
    \def\thirteen{13}
    \def\fourteen{14}
    \def\fifteen{15}
    \def\sixteen{16}
    \def\seventeen{17}

\begin{example} Let $n=17$ and the sequence $(\Lambda_i)$ be $(1,5,9,11,17)$. 
The corresponding tableau is
\begin{align*}
    \young(1,2345,6789,\ten\eleven,\twelve\thirteen\fourteen\fifteen\sixteen\seventeen)
\end{align*}
\begin{itemize}
    \item $r=5$, $s=6$.
    \item The sequence $(\lambda_i)$ is $(1,4,4,2,6)$.
    \item The sequence $(\nu_i)$ is $(5,4,3,3,1,1)$.
    \item $\row(3)=\left\{6,7,8,9\right\}$.
    \item $\fij1.4.=10,\ \fij4.3.=15,\ \fij6.1.=17$ etc.
    \item $F^1_{2,4}=\E_{2,10},\ F^3_{3,3}=\E_{14,14}$ etc. 
    \item $\set_1=\left\{1,3,4,5,16,17\right\}$.
    \item The sequence $l(i)$ is $(1,2,3,4,12,13)$.
    \item The sequence $m(i)$ is $(14,15,16,17,10,11,6,7,8,9,5)$.
\end{itemize}
\end{example}
\ifdraft\marginpar{example.tex}\fi

\ifdraft\marginpar{appendix.tex}\fi

\ifminuscule
\def\e{\ensuremath{e}}
\begin{prop}
Let $\boldsymbol{\nu}=(\nu_1,\ldots,\nu_r)$ and $\boldsymbol{\nu'}=(\nu'_1,\ldots,\nu'_s)$ be conjugate partitions, written in non-increasing order. 
Then \begin{align*}
    \sum\limits_{i=1}^r\nu_i^2=\sum\limits_{i=1}^s\sum\limits_{j=1}^s\min\{\nu'_i,\nu'_j\}=\sum\limits_{i=1}^s(2i-1)\nu'_i.
\end{align*}
\end{prop}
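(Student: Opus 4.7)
The plan is to prove both equalities by double counting arguments based on the Young diagram interpretation of conjugate partitions. The key fact underlying the argument is that, by the very definition of conjugate partition,
\[
    \nu_i = \#\{j \mid \nu'_j \geq i\},
\]
since the $i$-th row of the Young diagram of $\boldsymbol\nu$ has length equal to the number of columns of that diagram having height at least $i$. An equivalent way to state this: the Young diagram of $\boldsymbol\nu$ consists of the boxes $(i,j)$ with $1 \leq j \leq \nu_i$, and also of the boxes $(i,j)$ with $1 \leq i \leq \nu'_j$, giving the identity $\nu_i \geq j \iff \nu'_j \geq i$.

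For the first equality, I would square the displayed formula to obtain
\[
    \nu_i^2 = \#\bigl\{(j,k) \mid \nu'_j \geq i \text{ and } \nu'_k \geq i\bigr\},
\]
and then sum over $i$ and switch the order of summation:
\[
    \sum_{i=1}^r \nu_i^2 = \sum_{j=1}^s \sum_{k=1}^s \#\bigl\{i \geq 1 \mid i \leq \min(\nu'_j,\nu'_k)\bigr\} = \sum_{j=1}^s \sum_{k=1}^s \min\{\nu'_j, \nu'_k\}.
\]
(The upper bound $i \leq r$ may be dropped since $\nu'_j \leq r$ for all $j$, so any $i$ contributing is automatically at most $r$.)

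For the second equality, I would use that $\boldsymbol\nu'$ is written in non-increasing order, which gives $\min(\nu'_i, \nu'_j) = \nu'_{\max(i,j)}$. Grouping pairs by their maximum index, there are exactly $2k-1$ pairs $(i,j) \in \{1,\ldots,s\}^2$ with $\max(i,j) = k$, namely the pairs $(k,1),\ldots,(k,k),(1,k),\ldots,(k-1,k)$. Hence
\[
    \sum_{i=1}^s \sum_{j=1}^s \min\{\nu'_i,\nu'_j\} = \sum_{k=1}^s (2k-1)\,\nu'_k,
\]
as desired. This argument is purely combinatorial with no serious obstacle; the only care required is in handling the summation ranges and using the ordering hypothesis on $\boldsymbol\nu'$ at the right place.
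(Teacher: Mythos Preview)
Your proof is correct and follows essentially the same double-counting strategy as the paper. The only cosmetic difference is that the paper packages the first equality via an inner product: it places the vector $e_i$ in each box of row $i$ of the Young diagram, lets $v_j$ be the sum over column $j$ and $\mathbf v=\sum_j v_j=\sum_i \nu_i e_i$, and then reads off both $\sum\nu_i^2$ and $\sum_{i,j}\min\{\nu'_i,\nu'_j\}$ as two expressions for $\langle\mathbf v,\mathbf v\rangle$; the second equality is obtained exactly as you do it, using $\min\{\nu'_i,\nu'_j\}=\nu'_{\max\{i,j\}}$ and counting the $2k-1$ pairs with maximum $k$.
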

\begin{proof}
Let $\left\langle\bullet,\bullet\right\rangle$ be the dot product on $V$ given by $\langle \e_i,\e_j\rangle=\delta_{ij}$. 
Consider the Young diagram $\mathrm{\mathbf Y}$ whose $i^{th}$ row has $\nu_i$ boxes. 
Fill each box in the $i^{th}$ row of $\mathrm{\mathbf Y}$ with $\e_i$.
Let $v_i$ be the sum of all vectors in the $i^{th}$ coloumn and $\mathbf{v}$ the sum of the vectors in all the boxes in $\mathrm{\mathbf Y}$. 
Observe that $
    v_i =\sum\limits_{1\leq i\leq\nu'_i}\e_i$
and so\begin{align*}
    \langle v_i, v_j\rangle &=\left\langle\sum\limits_{k=1}^{\nu_i'}\e_k,\sum\limits_{k=1}^{\nu_j'}\e_k\right\rangle
                            =\min\{\nu'_i,\nu'_j\}\\
    \langle\mathbf v,\mathbf v\rangle   &=\sum\limits_{i=1}^s\sum\limits_{j=1}^s\langle v_i,v_j\rangle=\sum\limits_{i=1}^s\sum\limits_{j=1}^s\min\{\nu'_i,\nu'_j\}
\end{align*}
But we also have \begin{align*}
    \langle\mathbf{v},\mathbf v\rangle  &=\left\langle\sum\nu_i\e_i,\sum\nu_i\e_i\right\rangle
                                        =\sum\limits_{i,j}\delta_{ij}\nu_i\nu_j=\sum\nu_i^2 
\end{align*}
which gives the first part of the equality.
For the second part, we use $\min\{\nu'_i,\nu'_j\}=\nu'_{\max\{i,j\}}$ to get \begin{align*}
    \left\langle\mathbf v,\mathbf v\right\rangle    &=\left\langle\sum v_i,\sum\ v_j\right\rangle
                                                    =\sum\left\langle v_i, v_i\right\rangle+2\sum\limits_{i>j}\left\langle v_i, v_j\right\rangle\\
                                                    &=\sum\nu'_i+2\sum\limits_{i=1}^s\sum\limits_{j=1}^{i-1}\nu'_i 
                                                    =\sum\nu'_i+2\sum(i-1)\nu'_i
                                                    =\sum(2i-1)\nu'_i
\end{align*}
\end{proof}
\ifdraft\marginpar{appendix.tex}\fi

\begin{cor}
\label{magic}
Let $\boldsymbol\lambda, \boldsymbol\nu$ be as in \Cref{41}. Then:\begin{align*}
    \sum\limits_{i=1}^r\lambda_i^2=\sum\limits_{i=1}^s(2i-1)\nu_i.
\end{align*}\end{cor}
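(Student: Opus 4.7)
The plan is to reduce the corollary to a direct application of the preceding proposition. The right-hand side $\sum_{i=1}^s(2i-1)\nu_i$ matches the proposition's formula verbatim (with $\boldsymbol\nu$ of the corollary playing the role of $\boldsymbol\nu'$ in the proposition), so all that is needed is to match the left-hand side.

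First, I would observe that the quantity $\sum_{i=1}^r\lambda_i^2$ is symmetric in the entries $\lambda_i$, so it is unchanged if we rearrange $\boldsymbol\lambda$ in decreasing order. Denote by $\boldsymbol\lambda^\downarrow=(\lambda^\downarrow_1,\ldots,\lambda^\downarrow_r)$ this decreasing rearrangement. Then
\begin{align*}
    \sum_{i=1}^r \lambda_i^2 \;=\; \sum_{i=1}^r (\lambda^\downarrow_i)^2.
\end{align*}

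Next, by definition (see \Cref{41}) $\boldsymbol\nu$ is the conjugate partition of $\boldsymbol\lambda^\downarrow$. Both $\boldsymbol\lambda^\downarrow$ and $\boldsymbol\nu$ are therefore written in non-increasing order and are conjugate to one another, which is precisely the setup of the preceding proposition. Applying that proposition with $(\boldsymbol\nu,\boldsymbol\nu')$ of the proposition set equal to $(\boldsymbol\lambda^\downarrow,\boldsymbol\nu)$ of the corollary yields
\begin{align*}
    \sum_{i=1}^r (\lambda^\downarrow_i)^2 \;=\; \sum_{i=1}^s (2i-1)\nu_i,
\end{align*}
and combining the two displayed identities gives the claim. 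There is no real obstacle here; the only thing to check is the bookkeeping that rearranging $\boldsymbol\lambda$ to $\boldsymbol\lambda^\downarrow$ leaves the sum of squares unchanged and that $\boldsymbol\nu$ is by construction the conjugate of $\boldsymbol\lambda^\downarrow$ (not of $\boldsymbol\lambda$ itself, which need not be a partition in the non-increasing sense).
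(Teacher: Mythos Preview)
Your proof is correct and essentially identical to the paper's: both observe that $\boldsymbol\lambda$ is a permutation of the partition conjugate to $\boldsymbol\nu$, so that rearranging $\boldsymbol\lambda$ in decreasing order leaves $\sum\lambda_i^2$ unchanged and puts one directly in the setting of the preceding proposition. The paper's proof is a one-line version of exactly what you wrote.
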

\begin{proof}
The collection $(\nu_1,\ldots,\nu_s)$ is a partition of $n$, and the collection $(\lambda_1,\ldots,\lambda_r)$ is a permutation of the conjugate partition $(\nu'_1,\ldots,\nu'_r)$.
\end{proof}
\fi

\subsection{The element $\kappa$}
\label{formula:kappa}
\label{springer2}
Let $\kappa\in\What$ be given by the affine permutation matrix $$\sum\limits_{i=1}^st^{\nu_i-1}\E_{i,l(i)}+\sum\limits_{i=1}^{n-s}t^{-1}\E_{i+s,m(i)}$$
The commutative diagram of \Cref{springer} can be refined to the following:
\begin{center}
\includegraphics{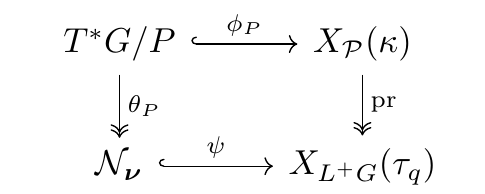}
\end{center}
The only additional part is the claim $\phi_P(T^*G/P)\subset X_\Para(\kappa)$. 
This is the content of \Cref{injective}.
\ifminuscule
Let $\phi_\para$ be as in \Cref{defnPhi}.
In this section, we show that the image of $\phi_\para$ is contained in $X_\Para(\kappa)$. 
We first construct a matrix \Z\ whose \para-orbit is dense in \u\ and compute the Bruhat cell \w\ containing $(1-t^{-1}Z)$.
We discuss the relationship between \w\ and $\kappa$, and then prove $\image(\phi_\para)\subset X_\Para(\kappa)$.
\fi
\subsection{The Matrix \Z}
\label{defn:z}
Let \Z\ be the $n\times n$ matrix given by 
$\Z=\sum\limits_{i=1}^{s}\sum\limits_{j=1}^{\nu_i-1}\f j,j+1.$.
Then \vskip -0.25em
$$Z\e_{\fij i.j.}=\begin{cases}\e_{\fij i.j-1.}&\text{if }\ j>1\\ 0&\text{if }\ j=1\end{cases}$$
It follows that $\left\{\e_{\fij i.\nu_i.}\middle\vert\ 1\leq i\leq s\right\}$ is a minimal generating set for $V(=\mathbf k^n)$ as a module over $\mathbf k[\Z]$ (the $\mathbf k$-algebra generated by \Z).
In particular, 
$\Z(V_i)\subset V_{i-1}$ for all $i$.
It follows that $\Z\in\u$, where \u\ is as in \Cref{defCotan}.
\\
\\
An element in $x\in\u$ is called a {\em Richardson element} of \u\ if the \para-orbit of $x$ is dense in \u, or equivalently, the \gl-orbit of $(1,\Z)$ is dense in $T^*\gl/\para=\gl\times^\para\u$.
A comprehensive study of Richardson elements in \u\ can be found in \cite{he}.
In particular, we will need the following:

\begin{lemma}
[Theorem 3.3 of \cite{he}]
\label{conjugacy}
\label{dense}
The matrix \Z\ is a Richardson element in \u.
\end{lemma}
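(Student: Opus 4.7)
The plan is to show $\dim(\para\cdot Z) = \dim\u$ via a direct orbit-stabilizer count, reading off the Jordan type of $Z$ from the column structure of the tableau in \Cref{41}.

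First, I would identify the Jordan type of $Z$. The tableau gives a decomposition $V = \bigoplus_{i=1}^s V^{(i)}$, where $V^{(i)} := \operatorname{span}\{\e_{\chi(i,j)}\mid 1\leq j\leq\nu_i\}$ is the span of the basis vectors sitting in the $i^{th}$ column. The defining formula for $Z$ shows that each $V^{(i)}$ is $Z$-stable and that $Z\vert_{V^{(i)}}$ is a single nilpotent Jordan block of size $\nu_i$, so the Jordan type of $Z$ is $\nu$.

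Next, I would combine this with the classical centralizer-dimension formula $\dim\operatorname{Cent}_{GL_n}(N) = \sum_i(2i-1)\mu_i$ for $N$ of Jordan type $(\mu_1,\ldots,\mu_s)$ listed in non-increasing order (with a $-1$ correction passing to $SL_n$). Applied to $Z$, this gives $\dim\operatorname{Cent}_\gl(Z) = \sum_i(2i-1)\nu_i - 1$. On the other hand, since $(\lambda_1,\ldots,\lambda_r)$ is a rearrangement of the conjugate partition to $\nu$, a standard combinatorial identity yields $\sum_i(2i-1)\nu_i = \sum_i\lambda_i^2$, and hence $\dim\operatorname{Cent}_\gl(Z) = \sum_i\lambda_i^2 - 1 = \dim\mathfrak{l}$, where $\mathfrak{l}$ is the Lie algebra of the Levi of $\para$. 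Since $\operatorname{Cent}_\para(Z)\subseteq\operatorname{Cent}_\gl(Z)$ and $\dim\para - \dim\u = \dim\mathfrak{l}$, the orbit-stabilizer formula gives
\[
\dim(\para\cdot Z) = \dim\para - \dim\operatorname{Cent}_\para(Z) \ge \dim\para - \dim\mathfrak{l} = \dim\u.
\]
Combined with $\para$-stability of $\u$ this forces equality, and the irreducibility of $\u$ then gives density.

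The main obstacle is the centralizer-dimension identity itself; while classical, I would prefer to derive it directly from the block decomposition above, to make the proof self-contained. An element of $\operatorname{Cent}_\gl(Z)$ corresponds to a $\mathbf{k}[Z]$-module endomorphism of $V$, which decomposes as $\bigoplus_{i,j}\operatorname{Hom}_{\mathbf{k}[Z]}(V^{(i)}, V^{(j)})$; each summand has dimension $\min(\nu_i,\nu_j)$, and summing gives $\sum_{i,j}\min(\nu_i,\nu_j) = \sum_i(2i-1)\nu_i$ using that $\nu$ is non-increasing.
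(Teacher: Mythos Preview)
Your proposal is correct and follows essentially the same route as the proof contained in the paper's source (conditionally excluded from the compiled version, where the lemma is simply cited to Hesselink): both compute the centralizer dimension of $Z$ in the ambient general linear group via the $\mathbf k[Z]$-module structure of $V$, invoke the identity $\sum_i(2i-1)\nu_i=\sum_i\lambda_i^2$, and conclude by orbit--stabilizer. The only cosmetic difference is that the source argument works in $\widetilde P=K\cdot P\subset GL_n$ (with $K$ the scalar matrices) to sidestep the $-1$ correction you make when passing to $SL_n$; your $\operatorname{Hom}_{\mathbf k[Z]}(V^{(i)},V^{(j)})$ count and the paper's injection $C(Z)\hookrightarrow\bigoplus_i\ker Z^{\nu_i}$ are equivalent computations of the same quantity.
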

\ifminuscule
Let $K$ be the group of non-zero scalar matrices over $\mathbf k$, and $\widetilde\para$ the subgroup of  $\operatorname{GL}_n(\mathbf k)$ generated by $K$ and \para.
Since $K$ acts trivially on \u, it is enough to show that the orbit of \Z\ under $\widetilde\para$ is dense in \u.
Let $C_{\widetilde\para}(\Z)$ (resp. $C(\Z)$) be the stabilizer of \Z\ under the conjugation action of $\widetilde\para$ (resp. $\operatorname{GL}_n(\mathbf k)$).
It is enough to show that $\dim C_{\widetilde\para}(Z)=\dim\widetilde\para-\dim\u$.
Observe first that \begin{align*}
    \dim C(\Z)\geq\dim C_{\widetilde\para}(\Z)\geq\dim\widetilde\para-\dim\u=\sum\limits_{i=1}^r\lambda_i^2.
\end{align*}
It is now enough to show that $\dim C(\Z)\leq\sum\lambda_i^2$, from which the result will follow.
\\
\\
Let $\zeta:\operatorname{GL}_n(\mathbf k)\hookrightarrow V^{\oplus n}$ be the map $g\mapsto(ge_1,\ldots,ge_n)$, and $\rho:V^{\oplus n}\rightarrow V^{\oplus s}$ the projection $(v_1,\ldots,v_n)\mapsto(v_{\fij 1.\nu_1.},\ldots,v_{\fij s.\nu_s.})$.
As a $\mathbf k[\Z]$-module, $V$ is generated by the set $\left\{\e_{\fij i.\nu_i.}\middle\vert\ 1\leq i\leq s\right\}$. 
Since $V$ is generated (as a $\mathbf k[\Z]$-module) by the set $\left\{\e_{\fij i.\nu_i.}\middle\vert\ 1\leq i\leq s\right\}$,
the action of $g\in C(\Z)$ on $V$ is determined by its action on the set $\left\{\e_{\fij i.\nu_i.}\middle\vert\ 1\leq i\leq s\right\}$. 
In particular, the map $\left(\rho\circ\zeta\right)\vert_{C(\Z)}$ is injective.
Furthermore, since $\Z^{\nu_i}\e_{\fij i.\nu_i.}=0$, it follows that $\image(p\circ\zeta)\vert_{C(\Z)}\subset\bigoplus\limits_{i=1}^s\ker\Z^{\nu_i}$.
The kernel of $\Z^{\nu_i}$ being spanned by $\left\{e_{\fij j.k.}\middle\vert\ j\leq s,\ k\leq\nu_i,\ k\leq\nu_j\right\}$, we deduce \begin{align*}
    \dim C(\Z)          \leq&\dim\left(\bigoplus_{i=1}^s\ker\Z^{\nu_i}\right)
                        =\sum\limits_{i=1}^s\dim\left(\ker\Z^{\nu_i}\right)\\
                        &=\sum\limits_{i=1}^s\sum\limits_{j=1}^s\#\left\{\e_{\fij j.k.}\mid k\leq\min\left\{\nu_i,\nu_j\right\}\right\} \\
                        &=\sum\limits_{i=1}^s\sum\limits_{j=1}^s\min\left\{\nu_i,\nu_j\right\}
                        =\sum\limits_{i=1}^r\lambda_i^2
\end{align*}
where the last equality is from \Cref{magic}.
\fi

\ifminuscule
\begin{cor}
\label{centralizerInP}
We have the equality $C_\gl(\Z)=C_\para(\Z)$. 
\end{cor}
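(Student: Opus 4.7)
The inclusion $C_\para(\Z) \subseteq C_\gl(\Z)$ is immediate, so the plan is to establish the reverse inclusion $C_\gl(\Z) \subseteq C_\para(\Z)$ by exploiting the Springer resolution $\theta_P : T^*\gl/\para = \gl \times^\para \u \to \Ni_{\boldsymbol\nu}$ from \Cref{springer}. The goal is to show that the set-theoretic fiber $\theta_P^{-1}(\Z)$ is the single point $[1, \Z]$, from which the inclusion will follow by a direct unfolding of the equivalence relation defining $\gl \times^\para \u$.

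Granting the fiber claim, for any $g \in C_\gl(\Z)$ the point $[g, \Z]$ satisfies $\theta_P([g, \Z]) = g \Z g^{-1} = \Z$, so $[g, \Z] = [1, \Z]$ in $\gl \times^\para \u$. Unfolding the equivalence relation from \Cref{defCotan}, the relation $(g, \Z) \sim (1, \Z)$ forces the existence of $p \in \para$ with $g = p$ and $p^{-1} \Z p = \Z$; hence $g \in \para \cap C_\gl(\Z) = C_\para(\Z)$, as required.

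To verify that $\theta_P^{-1}(\Z)$ is a single point, the key observation is that $\Z$ lies in the dense \gl-orbit $\Ni^\circ_{\boldsymbol\nu} \subset \Ni_{\boldsymbol\nu}$: the explicit form in \Cref{defn:z} exhibits a Jordan decomposition of $V$ into $s$ blocks of sizes $\nu_1, \ldots, \nu_s$, so $\Z$ has Jordan type $\boldsymbol\nu$. Since $\theta_P$ is a resolution of singularities and hence birational, its isomorphism locus $U \subseteq \Ni_{\boldsymbol\nu}$ is open and dense; by \gl-equivariance of $\theta_P$, the set $U$ is \gl-stable. Because $\Ni^\circ_{\boldsymbol\nu}$ and $U$ are both dense in the irreducible variety $\Ni_{\boldsymbol\nu}$, their intersection is non-empty, and since $\Ni^\circ_{\boldsymbol\nu}$ is a single \gl-orbit while $U$ is \gl-stable, this forces $\Ni^\circ_{\boldsymbol\nu} \subseteq U$. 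Hence $\Z \in U$ and $\theta_P^{-1}(\Z)$ consists of a single point. I expect the main subtlety to be precisely this equivariant isomorphism-locus step, but it should follow formally from the birationality of the Springer resolution together with the transitivity of \gl on the dense orbit $\Ni^\circ_{\boldsymbol\nu}$.
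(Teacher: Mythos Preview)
Your argument is correct and takes a genuinely different route from the paper's own proof. The paper argues via a dimension count: from the proof of \Cref{dense} one has $\dim C_{\widetilde\para}(\Z)\ge\sum\lambda_i^2$ and $\dim C(\Z)\le\sum\lambda_i^2$, so the two centralizers (in $GL_n$ and in $\widetilde\para$) have equal dimension; the paper then argues that $C(\Z)$ is connected (via its realization as a dense open subset of the affine space $\bigoplus_i\ker\Z^{\nu_i}$), so the closed subgroup $C_{\widetilde\para}(\Z)$ of the same dimension must coincide with $C(\Z)$, and one finishes by intersecting with \gl. Your approach instead bypasses the explicit linear algebra of the centralizer entirely, deducing the statement from the birationality and \gl-equivariance of the Springer map $\theta_P$ together with the fact that $\Z$ lies in the open orbit $\Ni^\circ_{\boldsymbol\nu}$. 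What your argument buys is conceptual cleanliness and independence from the somewhat delicate connectedness step in the paper; what the paper's argument buys is that it does not invoke the resolution property of $\theta_P$ (which is only cited from \cite{fu}), relying instead on the dimension estimates already established in the course of proving \Cref{dense}. One small remark: to make the ``isomorphism locus $U$'' step airtight you should note that $\theta_P$ is proper (it factors through the closed embedding $T^*\gl/\para\hookrightarrow\gl/\para\times\Ni$ followed by the proper projection $\gl/\para\times\Ni\to\Ni$), so that birationality indeed yields an open $U\subset\Ni_{\boldsymbol\nu}$ with $\theta_P^{-1}(U)\to U$ an isomorphism, not merely an open in the source mapping isomorphically to its image.
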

\begin{proof}
Since $C_\gl(\Z)=C(\Z)\bigcap\gl$ and $C_\para(\Z)=C_{\widetilde\para}(\Z)\bigcap\gl$, it is enough to show $C(\Z)=C_{\widetilde\para}(\Z)$.
The map $p\circ\zeta$ embeds $C(\Z)$ as a dense, hence irreducible, subset of $V^{\oplus s}$.
In particular the real codimension of $C(\Z)$ in $V^{\oplus s}$ is at least $2$, and so $C(\Z)$ is connected. 
It follows that $C_{\widetilde\para}(\Z)$, being a codimension $0$ closed subvariety of $C(\Z)$, must equal $C(\Z)$.
\end{proof}
\fi
\ifdraft\marginpar{z.tex}\fi
\ifdraft\marginpar{main.tex}\fi

\begin{prop}
\label{kappa}
\label{wpp}
Let $\Borel\w\Borel$ be the Bruhat cell containing $1-t^{-1}\Z$. 
A lift of \w\ to \N\ is given by \begin{align*}
    \overset\sim\w=\sum_{i=1}^s\left(t^{\nu_i-1}\f\nu_i,1.-\sum_{j=2}^{\nu_i}t^{-1}\f j-1,j.\right)
\end{align*}
\end{prop}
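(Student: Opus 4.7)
Set $u := 1 - t^{-1}\Z$. The plan is to produce explicit elements $b_1, b_2 \in \Borel$ with $b_1 u b_2 = \overset\sim\w$, which immediately gives $u \in \Borel\overset\sim\w\Borel = \Borel\w\Borel$. For the first (column) reduction, note that the column of $u$ indexed by $\chi(j,i)$ equals $e_{\chi(1,i)}$ when $j=1$ and $e_{\chi(j,i)} - t^{-1} e_{\chi(j-1,i)}$ when $j \geq 2$. I would right-multiply by
\[
b_2 := I + \sum_{i=1}^{s}\sum_{j=2}^{\nu_i} t^{j-1} E_{\chi(j,i),\,\chi(1,i)}.
\]
Since the entries in column $i$ of the tableau increase strictly downward, one has $\chi(j,i) > \chi(1,i)$ for $j \geq 2$, and the coefficients $t^{j-1}$ lie in $t\mathbf{k}[t]$, so $b_2 \in \Borel$. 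A telescoping calculation evaluates $\sum_{j=2}^{\nu_i} t^{j-1}\cdot(\text{column }\chi(j,i))$ as $t^{\nu_i - 1} e_{\chi(\nu_i,i)} - e_{\chi(1,i)}$, turning column $\chi(1,i)$ of $u\cdot b_2$ into $t^{\nu_i - 1} e_{\chi(\nu_i,i)}$ while every other column remains untouched.

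Next I would perform a row reduction on $u_1 := u\cdot b_2$. Using the injectivity of $\chi$, one checks that $u_1$ has nonzero entries only within the disjoint ``blocks'' of rows and columns indexed by $\{\chi(k,i) : 1 \leq k \leq \nu_i\}$, and that the remaining discrepancy with $\overset\sim\w$ is a single extraneous $1$ on the diagonal at each $(\chi(j,i), \chi(j,i))$ with $2 \leq j \leq \nu_i$. I would then left-multiply by
\[
b_1 := \prod_{i=1}^{s}\; (I + t E_{\chi(\nu_i,i),\, \chi(\nu_i - 1,i)})\cdots(I + t E_{\chi(2,i),\, \chi(1,i)}),
\]
with the factor having smaller $j$ placed on the right within each $i$-block; the $i$-blocks commute among themselves, and each elementary factor is lower-unipotent with off-diagonal entry $t \in t\mathbf{k}[t]$, so $b_1 \in \Borel$. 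An induction on $j$ shows that the factor for index $j$ cancels the unwanted diagonal $1$ at $(\chi(j,i),\chi(j,i))$ using the $-t^{-1}$ inherited from row $\chi(j-1,i)$, while preserving the $-t^{-1}$ in column $\chi(j+1,i)$ of the modified row so that the next step proceeds unobstructed. After processing all blocks one obtains $b_1 u_1 = \overset\sim\w$ exactly, hence $u \in \Borel\overset\sim\w\Borel = \Borel\w\Borel$.

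The principal bookkeeping obstacle is confirming that the row reductions stay within block $i$ and that the telescoping cancellations truly terminate at $\overset\sim\w$. Both points rest on two observations: distinct blocks are disjoint by injectivity of $\chi$, and after the column step the rows $\chi(j,i)$ of $u_1$ are already supported within block $i$, so the $t$-rescaled row additions can never propagate outside. Once this support statement is verified (a direct case analysis using $\chi(j,i) = \chi(j',i')$ iff $(j,i) = (j',i')$), the remaining telescoping is routine.
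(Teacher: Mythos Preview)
Your proof is correct and is essentially the paper's argument, phrased as Gaussian elimination rather than as a direct matrix identity. Expanding your product $b_1$ block by block gives precisely the paper's left factor $b=\sum_i b_i$ with $b_i=\sum_{j\le k}t^{k-j}F^i_{k,j}$, and your $b_2$ coincides with the paper's right factor $c=\sum_i c_i$ (the paper's displayed $c_i$ should read $\sum_{j=2}^{\nu_i}t^{j-1}F^i_{j,1}$, as its own subsequent computation confirms); the block-disjointness you invoke via injectivity of $\chi$ is exactly the orthogonality relation $F^i_{a,b}F^j_{c,d}=\delta_{ij}\delta_{bc}F^i_{a,d}$ used in the paper.
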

\begin{proof}
For $1\leq i\leq s$, let \begin{align*}
    b_i         &:=\sum\limits_{j=1}^{\nu_i}\sum\limits_{k=j}^{\nu_i}t^{k-j}\f k,j. 
                =\sum\limits_{j=1}^{\nu_i}\sum\limits_{k=0}^{\nu_i-j}t^k\f j+k,j. \\
    c_i         &:=\sum\limits_{j=1}^{\nu_i}\f j,j.+\sum\limits_{j=2}^{\nu_i}t^{j-1}\f j-1,1. \\
    \Z_i        &:=\sum\limits_{j=1}^{\nu_i}\f j,j.-t^{-1}\sum\limits_{j=1}^{\nu_i-1}\f j,j+1.
\end{align*}
We compute\begin{align*}
    b_i\Z_ic_i  &=\left(\sum\limits_{j=1}^{\nu_i}\sum\limits_{k=0}^{\nu_i-j}t^k\f j+k,j.\right)\left(\sum\limits_{j=1}^{\nu_i}\f j,j.-t^{-1}\sum\limits_{j=1}^{\nu_i-1}\f j,j+1.\right)c_i\\
                &=\left(\sum\limits_{j=1}^{\nu_i}\sum\limits_{k=0}^{\nu_i-j}t^k\f j+k,j.-\sum_{j=1}^{\nu_i-1}\sum_{k=0}^{\nu_i-j}t^{k-1}\f j+k,j+1.\right)c_i\\
                &=\left(\sum\limits_{j=1}^{\nu_i}\sum\limits_{k=0}^{\nu_i-j}t^k\f j+k,j.-\sum_{j=2}^{\nu_i}\sum_{k=-1}^{\nu_i-j}t^k\f j+k,j.\right)c_i\\
                &=\left(\sum_{k=0}^{\nu_i-1}t^k\f 1+k,1.-\sum_{j=2}^{\nu_i}t^{-1}\f j-1,j.\right)\left(\sum\limits_{j=1}^{\nu_i}\f j,j.+\sum\limits_{j=2}^{\nu_i}t^{j-1}\f j-1,1.\right)\\
                &=\sum_{k=0}^{\nu_i-1}t^k\f k+1,1.-\sum_{j=2}^{\nu_i}t^{-1}\f j-1,j.-\sum\limits_{j=2}^{\nu_i}t^{j-2}\f j-1,1.\\
                &=t^{\nu_i-1}\f\nu_i,1.-\sum_{j=2}^{\nu_i}t^{-1}\f j-1,j.
\end{align*}
Observe that $1-t^{-1}\Z=\sum\limits_{1\leq i\leq s}\Z_i$.
It follows from \Cref{delta} that for $i\neq j$, $b_i\Z_j=0$ and $\Z_jc_i=0$.
Writing $b=\sum\limits_ib_i$ and $c=\sum\limits_ic_i$, we see \begin{align*}b(1-t^{-1}\Z)c=\sum\limits_i b_i\Z_ic_i=\w\end{align*}
The result now follows from the observation $b,c\in\Borel$.
\end{proof}

\begin{lemma}
\label{samecoset}
There exist $w_g\in\What_{\gOhat}(=W),\ w_p\in\What_\Para(=W_P)$ such that $\w=w_g\kappa w_p$.
In particular, $X_\Para(\w)\subset\overline{\gOhat\kappa\Para}(\mod\Para)$.
\end{lemma}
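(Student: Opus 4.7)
The plan is to exhibit $w_g$ and $w_p$ explicitly by matching the nonzero entries of the affine permutation matrices on the two sides. Recall that left multiplication by $W$ permutes the rows of an affine permutation matrix arbitrarily, while right multiplication by $W_P$ permutes the columns within each row block $\row(k)$ of the tableau. Since $\What=N(\mathbf k[t,t^{-1}])/T$, any sign discrepancies on the monomial entries are absorbed by the torus, so it is enough to produce $\tau\in S_n$ and $\pi\in S_n$, with $\pi$ preserving each $\row(k)$, so that, entry by entry, $\overset\sim\w_{i,j}$ and $\kappa_{\tau^{-1}(i),\pi(j)}$ have the same nonzero positions and the same $t$-orders.

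First I would extract the data. By \Cref{kappa}, $\overset\sim\w$ has $t^{\nu_i-1}$ at $(\chi(i,\nu_i),\chi(i,1))$ and $-t^{-1}$ at $(\chi(i,j-1),\chi(i,j))$ for $2\leq j\leq\nu_i$; by \Cref{formula:kappa}, the lift of $\kappa$ has $t^{\nu_i-1}$ at $(i,l(i))$ for $1\leq i\leq s$ and $t^{-1}$ at $(s+a,m(a))$ for $1\leq a\leq n-s$. The natural matching pairs the high-order entries by setting $\pi(\chi(i,1))=l(i)$ and $\tau(i)=\chi(i,\nu_i)$ for $1\leq i\leq s$; on the remaining columns I would choose any row-respecting bijection $\psi:\set_2\to\{1,\ldots,n-s\}$ and set $\pi(c)=m(\psi(c))$ together with $\tau(s+\psi(\chi(i,j)))=\chi(i,j-1)$ for $j\geq 2$. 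The bottommost entries $\chi(i,\nu_i)$ and the non-bottommost entries $\chi(i,k)$ ($k<\nu_i$) together partition $\{1,\ldots,n\}$, so $\tau$ is a well-defined permutation; one then checks by going through the two cases that $w_g\kappa w_p$ and $\overset\sim\w$ agree up to signs.

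The main obstacle is checking $\pi\in W_P$, i.e., that $\pi$ preserves each $\row(k)$. The $\set_2$-part reduces to $|\set_2(k)|=|\blue(k)|=\lambda_k-|\set_1(k)|$, which is immediate from $|\set_1(k)|=|\red(k)|$ and lets us choose $\psi$ row by row. The $\set_1$-part reduces to the combinatorial identity
$$\{i:\chi(i,1)\in\row(k)\}=\{i:l(i)\in\row(k)\}.$$
I would prove this by showing both sides equal the interval $\{M_{k-1}+1,\ldots,M_k\}$, where $M_k:=\max_{j\leq k}\lambda_j$. The left-hand side equals this because $\chi(i,1)\in\row(k)$ iff $k=\min\{j:\lambda_j\geq i\}$, which pins $i$ to $(M_{k-1},\lambda_k]$ (empty when $\lambda_k\leq M_{k-1}$). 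For the right-hand side, the definition gives $|\red(j)|=\max(0,\lambda_j-M_{j-1})=M_j-M_{j-1}$, which telescopes to $\sum_{j\leq k}|\red(j)|=M_k$, and since $l$ enumerates $\red$ in increasing row order, $\{i:l(i)\in\row(k)\}=\{M_{k-1}+1,\ldots,M_k\}$.

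Once $\w=w_g\kappa w_p$ in $\What$ with $w_g\in W\subset\gOhat$ and $w_p\in W_P\subset\Para$, the ``in particular'' is immediate: $\Borel\w\Para=\Borel w_g\kappa w_p\Para\subset\gOhat\kappa\Para$, and taking closures modulo $\Para$ gives $X_\Para(\w)\subset\overline{\gOhat\kappa\Para}(\mod\Para)$.
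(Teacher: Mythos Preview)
Your proposal is correct and follows essentially the same route as the paper: both construct $w_g$ and $w_p$ explicitly by matching the $t^{\nu_i-1}$ entries of $\w$ with those of $\kappa$ via $\chi(i,1)\leftrightarrow l(i)$, and the $t^{-1}$ entries via a row-preserving bijection between $\set_2$ and $\blue$. The only notable difference is in verifying that $\chi(i,1)$ and $l(i)$ lie in the same row: the paper argues more tersely that since $\#\set_1(k)=\#\red(k)$ for each $k$ and both $(\chi(i,1))_i$ and $(l(i))_i$ are increasing sequences, the $i$th terms must fall in the same row; your computation via the running maxima $M_k=\max_{j\leq k}\lambda_j$ makes this explicit and is a fine alternative.
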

\begin{proof}
Recall the disjoint subsets $\set_1, \set_2$ of $\left\{1,\ldots,n\right\}$ from \Cref{redb4blue}.
Consider the bijection $\iota:\set_2\rightarrow\left\{\fij i.j.\mid 1\leq j\leq \nu_i-1\right\}$ given by $\iota(\fij i.j.)=\fij i.j-1.$. 
We reformulate \Cref{wpp} as 
$$    \overset\sim\w=\sum\limits_{i=1}^st^{\nu_i-1}\E_{\fij i.\nu_i.,\fij i.1.}-\sum\limits_{i\in\set_2}\E_{\iota(i),i}$$
It follows from \Cref{apm} that the affine permutation matrix of $\w$ is given by
$$    \w=\sum\limits_{i=1}^st^{\nu_i-1}\E_{\fij i.\nu_i.,\fij i.1.}+\sum\limits_{i\in\set_2}\E_{\iota(i),i}$$
Observe that $\#\red(k)=\#\set_1(k)$ and $\#\blue(k)=\#\set_2(k)$.
Since both $(l(i))_{1\leq i\leq s}$ and $(\fij i.1.)_{1\leq i\leq s}$ are increasing sequences, $\fij i.1.$ and $l(i)$ are in the same row for each $i$.
Furthermore, there exists an enumeration $t(1),\ldots,t(n-s)$ of $\set_2$ such that $t(i)$ is in the same row as $m(i)$ for all $i$.
We define $w_g\in W$ and $w_p\in\W_P$ via their affine permutation matrices: \begin{align*}
    w_g&=\sum\limits_{i=1}^s\E_{i,\fij i.\nu_i.}+\sum\limits_{i=1}^{n-s}\E_{i+s,\iota(t(i))}\\
    w_p&=\sum\limits_{i=1}^s\E_{\fij i.1.,l(i)}+\sum\limits_{i=1}^{n-s}\E_{t(i),m(i)}
\end{align*}
A simple calculation shows $\w=w_g\kappa w_p$.
\end{proof}

\begin{prop}
\label{gstable}
The Schubert variety $X_\Para(\kappa)$ is stable under left multiplication by \gOhat, i.e., $X_\Para(\kappa)=\overline{\gOhat\kappa\Para}(\mod\Para)$.
\end{prop}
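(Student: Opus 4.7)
The plan is to apply the standard criterion that a Schubert variety $X_\Para(\kappa)$ is stable under left multiplication by $\gOhat$ if and only if, for each simple reflection $s_i$ with $1 \le i \le n-1$, one of the following holds: either (i) $s_i\kappa < \kappa$ in $\What$ (equivalently, $\kappa^{-1}(\alpha_i) < 0$), or (ii) $\kappa^{-1}s_i\kappa \in \What_\Para$ (equivalently, $\kappa^{-1}(\alpha_i)$ is, up to sign, a simple root in $\Simple_\para$). The reverse containment $X_\Para(\kappa) = \overline{\Borel\kappa\Para/\Para} \subseteq \overline{\gOhat\kappa\Para/\Para}$ is automatic, so this criterion gives the desired equality.

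From the affine permutation matrix of $\kappa$ one reads off $\kappa^{-1}(j) = l(j) + (\nu_j-1)n$ for $1 \le j \le s$ and $\kappa^{-1}(s+j) = m(j) - n$ for $1 \le j \le n-s$, so $\kappa^{-1}(\alpha_i) = (\kappa^{-1}(i), \kappa^{-1}(i+1))$ is known explicitly. I split into three cases according to where $i$ and $i+1$ fall relative to $s$. In \textbf{Case A} ($1 \le i \le s-1$): if $\nu_i > \nu_{i+1}$, the $\delta$-shift $(\nu_i-\nu_{i+1})n \ge n$ dominates $|l(i+1)-l(i)| < n$, making $\kappa^{-1}(\alpha_i)$ negative; if $\nu_i = \nu_{i+1}$, equivalently no row of the tableau has length exactly $i$, then columns $i$ and $i+1$ have their topmost box in a common row $k$, which via the explicit formula for $l$ forces $l(i+1) = l(i)+1$ with both $l(i), l(i+1)$ lying strictly between $\Lambda_{k-1}$ and $\Lambda_k$, so $\kappa^{-1}(\alpha_i) = \alpha_{l(i)} \in \Simple_\para$. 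In \textbf{Case B} ($i = s$): the gap $\nu_s n \ge n$ exceeds $|m(1) - l(s)| < n$, giving $\kappa^{-1}(\alpha_s) < 0$. In \textbf{Case C} ($s+1 \le i \le n-1$): with $j = i-s$ we have $\kappa^{-1}(\alpha_i) \equiv (m(j), m(j+1))$, and either $m(j), m(j+1)$ are consecutive in a common row (giving $\alpha_{m(j)} \in \Simple_\para$ by the same row-interior reasoning) or they lie in distinct rows with $m(j)$ strictly below $m(j+1)$, so $m(j) > m(j+1)$ and the root is negative.

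Since the criterion is satisfied for every $i$, the proposition follows. The main obstacle is the subcase $\nu_i = \nu_{i+1}$ of Case A (and the analogous ``same row'' part of Case C): one must translate the numerical equality of column lengths into the sharp combinatorial statement that $l(i), l(i+1)$ are consecutive, row-interior integers of the tableau. This rests on the identity $\sum_{k' \le k} \#\red(k') = \max_{j \le k} \lambda_j$, which says the enumeration of $\red$ changes rows precisely at the ``record values'' of the row-length sequence; an analogous observation governs the bottom-to-top, left-to-right enumeration of $\blue$ needed for Case C.
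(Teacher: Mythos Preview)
Your proof is correct and follows essentially the same approach as the paper's: both verify, for each $1\le i\le n-1$, that either $s_i\kappa<\kappa$ or $s_i\kappa\equiv\kappa\pmod{\What_\Para}$, splitting into the same five subcases (your Cases A--C with their subcases correspond exactly to the paper's cases (1)--(5)). The only cosmetic difference is that the paper phrases each case via \Cref{table}, whereas you compute $\kappa^{-1}(\alpha_i)$ directly; your identity $\sum_{k'\le k}\#\red(k')=\max_{j\le k}\lambda_j$ makes explicit the step the paper leaves implicit when asserting that $l(i)$ and $l(i+1)$ lie in the same row when $\nu_i=\nu_{i+1}$.
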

\begin{proof}
Consider the affine permutation matrix of $\kappa$.
We will show, for $1\leq i<n$, either $s_i\kappa=\kappa(\mod\Para)$ or $s_i\kappa<\kappa$.
We split the proof into several cases, and use \Cref{table}:\begin{enumerate}
    \item $i<s$ and $\nu_i=\nu_{i+1}$ {\bf:} We deduce from $\nu_i=\nu_{i+1}$ that the entries $l(i)$ and $l(i+1)$ appear in the same row of the tableau. 
        In particular, $l(i)\in\Simple_\para$. 
        The non-zero entries of the $i^{th}$ and $(i+1)^{th}$ row are $t^{\nu_i-1}\E_{i,l(i)}$ and $t^{\nu_{i+1}-1}\E_{i+1,l(i+1)}$ respectively.
        We see $s_i\kappa=\kappa s_{l(i)}=\kappa(\mod\What_\Para)$. 
    \item $i<s$ and $\nu_i>\nu_{i+1}$ {\bf:} The non-zero entries of the $i^{th}$ and $(i+1)^{th}$ row are $t^{\nu_i-1}\E_{i,l(i)}$ and $t^{\nu_{i+1}-1}\E_{i+1,l(i+1)}$ respectively.
        Case $2$ of \Cref{table} applies with $a=l(i),\,b=l(i+1)$, and we have $s_i\kappa<\kappa$.
    \item $i=s$ {\bf:} The non-zero entries of the $i^{th}$ and $(i+1)^{th}$ row are $t^{\nu_s-1}\E_{s,l(s)}$ and $t^{-1}\E_{s+1,m(1)}$ respectively.
        Since $m(1)\in\blue(r)$, it follows from \Cref{redb4blue} that $l(s)<m(1)$.
        Case $2$ of \Cref{table} applied with $a=l(s),\,b=m(1)$ tells us $s_i\kappa<\kappa$.
    \item $i>s$ and $m(i-s)\in\Simple_\para$ {\bf:} The non-zero entries of the $i$ and $(i+1)^{th}$ row are $t^{-1}\E_{i,m(i-s)}$ and $t^{-1}\E_{i+1,m(i+1-s)}$ respectively.
        It follows $s_i\kappa=\kappa s_{m(i-s)}=\kappa(\mod\What_\Para)$.
    \item $i>s$ and $m(i-s)\notin\Simple_\para$ {\bf:} It follows from $m(i-s)\notin\Simple_\para$ that if $m(i-s)\in\row(j)$ then $m(i+1-s)\in\row(j-1)$.
        In particular, $m(i+1-s)<m(i-s)$.
        The non-zero entries of the $i$ and $(i+1)^{th}$ row are $t^{-1}\E_{i,m(i-s)}$ and $t^{-1}\E_{i+1,m(i+1-s)}$ respectively.
        Case $1$ of \Cref{table} applied with $a=m(i+1-s),\,b=m(i+s)$ tells us $s_i\kappa<\kappa$.
\end{enumerate}
\end{proof}

\begin{theorem}
\label{injective}
Let $\phi_\para$ be as in \Cref{defnPhi}.
Then $\image(\phi_\para)\subset X_\Para(\kappa)$.
In particular, the map $\phi_\para$ gives a compactification of $T^*\gl/\para$.
Further $\kappa\in\What^\Para$ is minimal for the property $\image(\phi_\para)\subset X_\Para(\kappa)$.
\end{theorem}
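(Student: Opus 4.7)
The plan is to establish the three assertions of the theorem in sequence, relying on \Cref{wpp}, \Cref{samecoset}, \Cref{gstable}, and the Richardson density \Cref{dense}.

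For the containment $\image(\phi_\para)\subset X_\Para(\kappa)$, \Cref{wpp} places $1-t^{-1}\Z$ in the Iwahori double coset $\Borel\w\Borel$, so $\phi_\para(1,\Z)=(1-t^{-1}\Z)(\mod\Para)$ lies in $\Borel\w\Para(\mod\Para)$; by \Cref{samecoset} this is contained in $\overline{\gOhat\kappa\Para}(\mod\Para)$, which equals $X_\Para(\kappa)$ by \Cref{gstable}. Since $\phi_\para$ is \gl-equivariant and $X_\Para(\kappa)$ is \gl-stable (as $\gl\subset\gOhat$ and $X_\Para(\kappa)$ is $\gOhat$-stable by \Cref{gstable}), I obtain $\phi_\para(g,\Z)\in X_\Para(\kappa)$ for every $g\in\gl$. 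By \Cref{dense} the \gl-orbit of $(1,\Z)\in T^*\gl/\para=\gl\times^\para\u$ is dense, so the closed subset $\phi_\para^{-1}(X_\Para(\kappa))\subset T^*\gl/\para$ contains a dense subset and hence equals all of $T^*\gl/\para$. The compactification assertion is then immediate: $X_\Para(\kappa)$ is projective and $\phi_\para$ is injective, so the closure of $\image(\phi_\para)$ inside $X_\Para(\kappa)$ is a projective compactification of $T^*\gl/\para$.

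For minimality, suppose $\kappa'\in\What^\Para$ with $\image(\phi_\para)\subset X_\Para(\kappa')$. The aim is to conclude $\kappa\leq\kappa'$, which is equivalent to exhibiting a point of $\image(\phi_\para)$ in the open Bruhat cell $\Borel\kappa\Para(\mod\Para)\subset X_\Para(\kappa)$, since then $X_\Para(\kappa')$ must contain this cell and hence all of $X_\Para(\kappa)$. Combining the factorization $\overset\sim\w=w_g\kappa w_p$ of \Cref{samecoset} with $\overset\sim\w=b(1-t^{-1}\Z)c$ of \Cref{wpp}, and using $w_g\in W\subset\gl$, I would translate $\phi_\para(1,\Z)$ by a generic element $g\in\borel w_g^{-1}\borel\subset\gl$: the product $g(1-t^{-1}\Z)$ lies in $\Borel w_g^{-1}\Borel\cdot\Borel\w\Borel$, and Bruhat multiplication in the loop group places $\Borel w_g^{-1}\w\Borel=\Borel\kappa w_p\Borel$ as the top stratum of this product, which modulo $\Para$ projects onto the desired cell $\Borel\kappa\Para(\mod\Para)$.

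The main obstacle lies in this last step: verifying that on a Zariski-open subset of $\borel w_g^{-1}\borel$ the image $g(1-t^{-1}\Z)$ actually lands in the top stratum $\Borel w_g^{-1}\w\Borel$ rather than falling into one of the smaller cells appearing in the product $\Borel w_g^{-1}\Borel\cdot\Borel\w\Borel$. This is a length-additivity question in the affine Bruhat decomposition, which I expect to resolve using the root-theoretic dictionary of \Cref{table} to track how reduced expressions of $w_g^{-1}$ interact with $\w=w_g\kappa w_p$, together with the dimension count $\dim\image(\phi_\para)=\dim T^*\gl/\para$ to confirm that the locus of $g$ hitting the top cell is nonempty and hence open.
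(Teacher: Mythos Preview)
Your argument for the containment $\image(\phi_\para)\subset X_\Para(\kappa)$ and for the compactification statement is correct and matches the paper's proof essentially verbatim.

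For minimality, you have correctly identified a real obstacle, but your proposed workaround is not how the paper proceeds, and it does not obviously close the gap. The difficulty is that the specific element $w_g^{-1}$ supplied by \Cref{samecoset} need not satisfy $l(w_g^{-1}\w)=l(w_g^{-1})+l(\w)$; without this, the product $\Borel w_g^{-1}\Borel\cdot\Borel\w\Borel$ is a union of several cells, and there is no a priori reason $\Borel\kappa w_p\Borel$ is its top stratum, nor that a generic $g\in Bw_g^{-1}B$ (a proper subvariety of $\Borel w_g^{-1}\Borel$) multiplied by the \emph{fixed} point $1-t^{-1}\Z$ lands there. Your proposed dimension count cannot rescue this: by \Cref{lengthofkappa} one has $\dim\image(\phi_\para)=2\dim\gl/\para<l(\kappa)$ whenever $\para$ is not maximal, so the image is not dense in $X_\Para(\kappa)$ and no dimension argument forces it to meet the open cell.

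The paper instead extracts more from \Cref{gstable}: that proposition says precisely that $\kappa$ is the \emph{maximal} element of the coset $W\w$ (read modulo $\What_\Para$). Standard Coxeter theory for maximal coset representatives then furnishes some $w\in W$---in general not $w_g^{-1}$---with $w\w$ representing $\kappa$ and $l(w\w)=l(w)+l(\w)$. With length additivity in hand, $\Borel w\Borel\cdot\Borel\w\Borel$ equals the single cell $\Borel w\w\Borel$, so \emph{every} $a\in BwB\subset\gl$ satisfies $\phi_\para(a,\Z)\in\Borel\kappa\Para/\Para$, and minimality follows at once. In short: do not attempt to verify additivity for the particular $w_g^{-1}$; invoke the maximality of $\kappa$ established in \Cref{gstable} to obtain a $w$ for which additivity is automatic.
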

\begin{proof}
Let $\mathcal O$ denote the \gl-orbit of $(1,\Z)\in\gl\times^\para\u$.
It follows from \Cref{dense} that $T^*\gl/\para=\overline{\mathcal O}$, and from \Cref{kappa} that $\phi_\para(1,Z)\in\Borel\w\Para/\Para$.
Since $\phi_\para$ is \gl-equivariant, it follows that $\phi_\para(\mathcal O)\subset\gl\Borel\w\Para/\Para\subset\gOhat\w\Para/\Para$, and so 
$$\phi_\para\left(T^*\gl/\para\right) =\phi_\para\left(\overline{\mathcal O}\right)\subset\overline{\phi_\para\left(\mathcal O\right)}\subset\overline{\gOhat\w\Para/\Para}=X_\Para(\kappa)$$
where the last equality follows from \Cref{samecoset} and \Cref{gstable}.
Further, $\overline{\phi_\para(T^*\gl/\para)}$, being a closed subvariety of $X_\Para(\kappa)$, is compact.
\\
\\
To prove the minimality of $\kappa$, we show that there exists a $a\in\gl$ such that $\phi_\para(a,\Z)\in\Borel\kappa\Para/\Para$.
\Cref{gstable} implies that $\kappa$ is maximal in the right coset $\W\w$. 
In particular, there exists $w\in\W$ such that $\kappa=w\w$ and $l(\kappa)=l(w)+l(\w)$.
It follows that $\overline{\Borel w\Borel\w\Para}=\overline{\Borel\kappa\Para}$ (see \cite{sk} for details), and $\phi_\para(a,\Z)\in X_\Para(\kappa)$ for any $a\in\Borel w\Borel$.
\end{proof}

\ifminuscule
\begin{lemma}
\label{length}
The length of $\tau_\betac(\in\What)$ is $2\dim\gl/\para$.
\end{lemma}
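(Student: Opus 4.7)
The plan is to directly apply the explicit length formula from \Cref{formula:length} to the affine permutation matrix of $\tau_\betac$. In the notation of that subsection, $\tau_\betac$ corresponds to $\sigma=\operatorname{id}$ together with the exponents $c_i=\nu_i-1$ for $1\leq i\leq s$ and $c_i=-1$ for $s<i\leq n$. Since $\sigma$ is the identity, $f_\sigma(i,j)=0$ for every $i<j$, and \Cref{eq:length} reduces to
$$l(\tau_\betac)=\sum_{1\leq i<j\leq n}\left\lvert c_i-c_j\right\rvert.$$

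Next I would split this double sum into three regimes according to where $i,j$ lie with respect to $s$. For $s<i<j\leq n$ the summand vanishes. For $1\leq i\leq s<j\leq n$ we have $c_i-c_j=\nu_i\geq 1$, contributing $\nu_i$ per pair and summing to $(n-s)\sum_{i=1}^s\nu_i=n(n-s)$ after using $\sum_i\nu_i=n$. For $1\leq i<j\leq s$ the parts are non-increasing, so $c_i-c_j=\nu_i-\nu_j\geq 0$, and collecting coefficients of each $\nu_i$ gives $\sum_{1\leq i<j\leq s}(\nu_i-\nu_j)=\sum_{i=1}^s\nu_i(s-2i+1)$.

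Adding the contributions and rewriting $n(n-s)=(n-s)\sum_{i=1}^s\nu_i$ folds everything into a single sum:
$$l(\tau_\betac)=\sum_{i=1}^s\nu_i(n-2i+1)=n^2-\sum_{i=1}^s(2i-1)\nu_i.$$
Finally I would invoke \Cref{magic}, which identifies $\sum_{i=1}^s(2i-1)\nu_i$ with $\sum_{j=1}^r\lambda_j^2$, so that $l(\tau_\betac)=n^2-\sum_j\lambda_j^2=\bigl(\sum_j\lambda_j\bigr)^2-\sum_j\lambda_j^2=2\sum_{i<j}\lambda_i\lambda_j=2\dim G/P$, the last equality being the standard formula for $\dim\u$ recorded earlier.

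The proof is essentially a direct calculation, and the only real obstacle is careful bookkeeping: verifying the signs in the three regimes (in particular that $\boldsymbol\nu$ being non-increasing lets one drop the absolute values in the $i<j\leq s$ case), and then recognising the final combinatorial expression. The non-trivial ingredient that makes this identity come out cleanly is \Cref{magic}, which packages the required identity between $\sum(2i-1)\nu_i$ and $\sum\lambda_j^2$ for conjugate partitions.
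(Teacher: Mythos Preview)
Your proof is correct and follows essentially the same route as the paper: both reduce the length of $\tau_\betac$ to the sum $\sum_{1\leq i<j\leq n}\lvert c_i-c_j\rvert$, split it into the same three regimes, arrive at $n^2-\sum_i(2i-1)\nu_i$, and finish via \Cref{magic}. The only cosmetic difference is that the paper phrases the starting point as $\sum_{i<j}\lvert\langle\epsilon_i-\epsilon_j,\betac\rangle\rvert$ rather than citing \Cref{formula:length} directly.
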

\begin{proof}
Let $\langle\ ,\ \rangle$ denote the dual pairing on $\cartan^*\times\cartan$.
We use \Cref{lengthoftau} to compute $l(\tau_\betac)$: \begin{align*}
    l(\tau_\betac)  &=\sum\limits_{1\leq i<j\leq n}\left\vert\left\langle \epsilon_i-\epsilon_j,\betac\right\rangle\right\vert\\
                    &=\sum\limits_{1\leq i<j\leq n}\left\lvert\left\langle \epsilon_i-\epsilon_j,\mathrm{Id}_{n\times n}-\sum\limits\nu_k\E_{kk}\right\rangle\right\rvert\\
                    &=\sum\limits_{1\leq i<j\leq n}\left\lvert\left\langle \epsilon_i-\epsilon_j,\sum\limits\nu_k\E_{kk}\right\rangle\right\rvert\\
                    &=\sum\limits_{1\leq i<j\leq s}\left\lvert\left\langle \epsilon_i-\epsilon_j,\sum\limits\nu_k\E_{kk}\right\rangle\right\rvert+\sum\limits_{\substack{1\leq i\leq s\\ s<j\leq n}}\left\lvert\left\langle\epsilon_i-\epsilon_j,\sum\limits\nu_k\E_{kk}\right\rangle\right\rvert \\ 
                    &=\sum\limits_{1\leq i<j\leq s}\left(\nu_i-\nu_j\right)+\sum\limits_{\substack{1\leq i\leq s\\ s<j\leq n}}\nu_i 
\end{align*}\begin{align*}
                    &=\sum\limits_{k=1}^{s} (s+1-2k)\nu_k+(n-s)\sum\limits_{k=1}^s\nu_i\\
                    &=s\sum\limits_{k=1}^s\nu_k - \sum\limits_{k=1}^s(2k-1)\nu_k+(n-s)n\\
                    &=sn - \sum\limits_{k=1}^{s} (2k-1)\nu_k +(n-s)n\\
                    &=n^2-\sum\lambda_i^2\qquad\qquad\qquad(\text{cf. \Cref{magic}})\\
                    &=2\dim\gl/\para.
\end{align*}
\end{proof}
\fi
\ifminuscule
\begin{theorem}
\label{springer2}
\label{lusztig}
There exists a resolution of singularities $\theta:T^*\gl/\para\rightarrow\Ni_{\boldsymbol\nu}$, given by $(g,N)\mapsto gNg^{-1}$ for $(g,N)\in\gl\times^\para\u$.
Furthermore, we have the following commutative diagram:\begin{center}
\includegraphics{diagram3.pdf}
\end{center}
where $\operatorname{pr}$ is the restriction to $X_\Para(\kappa)$ of the natural projection $\gLhat/\Para\twoheadrightarrow\gLhat/\gOhat$.
\ifdraft\marginpar{The map $\psi_\nu$ is an open embedding.}\fi 
\end{theorem}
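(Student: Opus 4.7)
The plan is to handle the two claims separately. Claim (1), that $\theta$ is a resolution of singularities with image $\Ni_{\boldsymbol\nu}$, is essentially a citation. Using the alternative description of $T^*G/P$ from \Cref{defcotan2}, as the incidence variety of pairs $(X,F_\bullet)$ with $X\in\Ni$, $\dim F_i/F_{i-1}=\lambda_i$, and $X(F_i)\subset F_{i-1}$, the map $\theta$ becomes the first projection. Its source is smooth, being the total space of a vector bundle over $G/P$, and $\theta$ is projective because $G/P$ is complete. The stability condition $X(F_i)\subset F_{i-1}$ forces the Jordan type of $X$ to be dominated by $\boldsymbol\nu$, while every nilpotent of Jordan type $\boldsymbol\nu$ admits such a filtration (read off from its Jordan block decomposition), so $\image\theta=\Ni_{\boldsymbol\nu}$. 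Over the open orbit $\Ni^\circ_{\boldsymbol\nu}$ the filtration is uniquely determined by the kernels of powers of $X$, so $\theta$ is birational, hence a resolution; see \cite{fu} for details.

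Claim (2), the commutativity of the diagram, follows from a one-line manipulation. For $(g,X)\in G\times^\para\u$,
\[
    \pr\circ\phi_\para(g,X)=g(1-t^{-1}X)\pmod{\gOhat}.
\]
Since $g^{-1}\in G\subset\gOhat$, right-multiplication by $g^{-1}$ preserves the coset, giving
\[
    g(1-t^{-1}X)\pmod{\gOhat}=g(1-t^{-1}X)g^{-1}\pmod{\gOhat}=(1-t^{-1}gXg^{-1})\pmod{\gOhat}=\psi(gXg^{-1})=\psi\circ\theta(g,X),
\]
as desired.

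It remains to check that the right vertical arrow $\pr$ indeed carries $X_\Para(\kappa)$ into $X_{\gOhat}(\tau_\betac)$. Comparing the affine permutation matrices in \Cref{formula:kappa} and \Cref{betac}, a direct multiplication yields $\kappa=\tau_\betac\cdot\sigma$, where $\sigma\in S_n=W_{\gOhat}$ is the permutation $i\mapsto l(i)$ for $1\le i\le s$ and $i+s\mapsto m(i)$ for $1\le i\le n-s$. Hence $\kappa$ and $\tau_\betac$ lie in the same coset of $W_{\gOhat}$ in $\What$, and a short application of \Cref{count} shows $\tau_\betac\in\What^{\gOhat}$, so $\pr$ sends $X_\Para(\kappa)$ onto $X_{\gOhat}(\tau_\betac)$. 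I do not anticipate any substantial obstacle: claim (1) is well documented, claim (2) is immediate once one notices $g^{-1}\in\gOhat$, and the only care needed is in the bookkeeping around the projection $\gLhat/\Para\to\gLhat/\gOhat$ at the level of Bruhat cells.
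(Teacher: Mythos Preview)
Your proposal is correct and follows essentially the same line as the paper. Both arguments cite \cite{fu} (and the paper also \cite{he}) for the resolution property of $\theta_P$, and both reduce the Schubert-variety bookkeeping to the factorization $\kappa=\tau_\betac\sigma$ with $\sigma\in W=W_{\gOhat}$, yielding $X_{\gOhat}(\kappa)=X_{\gOhat}(\tau_\betac)$; the paper then invokes \Cref{injective} together with the dimension equality $\dim\Ni_{\boldsymbol\nu}=l(\tau_\betac)$, whereas you spell out the commutativity $\pr\circ\phi_P=\psi\circ\theta_P$ via $g^{-1}\in\gOhat$, a step the paper leaves implicit.
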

\fi
\ifminuscule
\begin{proof}
\label{sigma}
The existence and desingularization property of $\theta_P$ are well-known, and are implied, for example, by the main results of \cite{he} and \cite{fu}.
Recall the injective map $\psi$ from \Cref{springer}; let $\psi_{\boldsymbol\nu}=\psi|\Ni_{\boldsymbol\nu}$.
Note that $\kappa=\tau_\betac\sigma$, where the affine permutation matrix of $\sigma\in W$ is given by \begin{align}
\label{form:sigma}
    \sigma=\sum\limits_{i=1}^{s}\E_{i,l(i)}+\sum\limits_{i=1}^{n-s}\E_{i+s,m(i)}
\end{align}
In particular, $X_{\gOhat}(\tau_\betac)=X_{\gOhat}(\kappa)$, and so it follows from \Cref{injective} that $\psi(\Ni_{\boldsymbol\nu})\subset X_{\gOhat}(\tau_\betac)$.
Since $\dim\Ni_{\boldsymbol\nu}=2\dim\gl/\para=l(\tau_\betac)=\dim X_{\gOhat}(\tau_\betac)$, the result follows.
\end{proof}
\fi

\ifminuscule
\section{Dimension of $X_\Para(\kappa)$}
Lakshmibai (cf. \cite{vl}) has constructed, for \para\ a maximal parabolic subgroup in \gl, a similar embedding $\eta_\para:T^*\gl/\para\hookrightarrow X_\Para(\eta)$. 
The map $\eta_\para$ is dominant, thus giving a Schubert variety as a compactification of $T^*\gl/\para$.
Lakshmibai et al. (cf. \cite{crv}) discuss a family of maps $T^*\gl/\borel\rightarrow \gLhat/\Borel$, including the map $\phi_\borel$, for which the image of $T^*\gl/\borel$ is \emph{not} a Schubert variety.
With this in mind, we compute the dimension of $X_\Para(\kappa)$, and thus the codimension of $\image(\phi_\para)$ in $X_\Para(\kappa)$. 
We also recover Lakshmibai's result, i.e., for \para\ a maximal parabolic subgroup in \gl, the map $\phi_\para$ is dominant.
In fact, \Cref{maximalresult} says that $X_\Para(\kappa)$ is a compactification of $T^*G/P$ if and only if $P$ is a maximal parabolic subgroup.
\fi

\begin{prop}
The dimension of $X_\Para(\kappa)$ is $l(\kappa)$, the length of $\kappa$.
\end{prop}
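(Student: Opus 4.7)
The plan is to reduce the statement to the general fact, recorded in Section 2.8, that $\dim X_\Para(w) = l(w)$ for every $w \in \What^\Para$. It therefore suffices to verify $\kappa \in \What^\Para$, equivalently, that $\kappa s_j > \kappa$ for every simple reflection $s_j$ with $\alpha_j \in \Simple_\Para$. By the criterion $w s_\alpha > w \iff w(\alpha) > 0$ used in the proof of \Cref{count}, this means showing $\kappa(\alpha_j) > 0$; and under the identification of \Cref{eq:zz}, $\alpha_j$ corresponds to $(j,j+1)$, so the condition becomes $\kappa(j) < \kappa(j+1)$.

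The simple roots in $\Simple_\Para$ are exactly those $\alpha_j$ with $1 \leq j \leq n-1$ for which $j$ and $j+1$ lie in a common row of the tableau of \Cref{41}. Reading off the affine permutation matrix of $\kappa$ in \Cref{formula:kappa}, one finds
\[
\kappa(l(a)) = a - (\nu_a - 1)n, \qquad \kappa(m(b)) = b + s + n.
\]
I would then split into three cases according to the placement of $j$ and $j+1$ in $\red \sqcup \blue$. If both lie in $\red$, then the contiguity of $\red(i)$ inside $\row(i)$ and the monotonicity of the enumeration $l$ force $j = l(a)$ and $j+1 = l(a+1)$, so $\kappa(j+1) - \kappa(j) = 1 + (\nu_a - \nu_{a+1})n \geq 1$, using that $\nu$ is non-increasing. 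If both lie in $\blue$, the row-by-row, left-to-right enumeration convention for $m$ forces $j = m(b)$ and $j+1 = m(b+1)$, so the difference is $1$. Since $\red(i)$ precedes $\blue(i)$ inside $\row(i)$ by \Cref{redb4blue}, the only remaining case is $j \in \red$, $j+1 \in \blue$; there $\kappa(j) \leq s$ whereas $\kappa(j+1) \geq s + n + 1$, which gives the inequality with room to spare.

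There is no serious conceptual obstacle here; the proof is essentially combinatorial bookkeeping. The only thing one must check carefully is that the three cases genuinely exhaust all possibilities and that adjacency of $j, j+1$ within a row of the tableau translates into the claimed adjacencies in the sequences $l$ and $m$, both of which follow immediately from the definitions in \Cref{redb4blue}.
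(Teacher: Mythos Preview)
Your proof is correct and follows essentially the same approach as the paper: both reduce to showing $\kappa \in \What^\Para$ by verifying $\kappa s_j > \kappa$ for each $\alpha_j \in S_P$, and both carry out the same three-case analysis on whether $j$ and $j+1$ lie in $\red$ or $\blue$. The only difference is cosmetic: the paper invokes \Cref{table} in each case, whereas you compute $\kappa(j)$ and $\kappa(j+1)$ directly from the affine permutation matrix and compare, which is arguably a bit more transparent.
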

\begin{proof}
We need to show that $\kappa\in\What^\Para$. 
For $\alpha_i\in\Simple_\para$, we show $\kappa s_i>\kappa$.
Recall the partitioning of $\left\{1,\ldots,n\right\}$ from \Cref{redb4blue} into \red\ and \blue.
Note that $\alpha_i\in\Simple_\para$, implies $i$ and $i+1$ appear in the same row of the tableau.
In particular, if $i\in\Simple_\para\bigcap\blue(j)$ then $i+1\in\blue(j)$.
\begin{enumerate}
    \item   Suppose $i\in\blue$. 
            Then $i=m(k)$ and $i+1=m(k+1)$ for some $k$. 
            The non-zero entries in the $i^{th}$ and $(i+1)^{th}$ columns are $t^{-1}\E_{k+s,i}$ and $t^{-1}\E_{k+s+1,i+1}$.
            We apply Case 1 of \Cref{table} with $a=i,\,b=i+1$.
    \item   Suppose $i\in\red$ and $i+1\in\blue$.
            The non-zero entries in the $i^{th}$ and $(i+1)^{th}$ columns are $t^{\nu_k-1}\E_{k,i}$ and $t^{-1}\E_{j,i+1}$.
            Since $k\leq s<j$, we can apply Case 2 of \Cref{table} with $a=i,\,b=i+1$, $\nu_k-1=\ord(t_a)>\ord(t_b)=-1$ and $i=\sigma(a)<\sigma(b)=i+1$ to get $\kappa<\kappa s_i$.
    \item   Suppose $i,i+1\in\red$.
            Since $i$ and $i+1$ are in the same row of the tableau, we have $i=l(k)$ and $i+1=l(k+1)$ for some $k$. 
            The non-zero entries in the $i^{th}$ and $(i+1)^{th}$ columns are $t^{\nu_k-1}\E_{k,i}$ and $t^{\nu_{k+1}-1}\E_{k+1,i+1}$.
            If $\nu_k=\nu_{k+1}$, Case 1 of \Cref{table} applies with $a=i,\,b=i+1$ to give $\kappa<\kappa s_i$.
            If $\nu_k>\nu_{k+1}$, Case 2 of \Cref{table} applies with $a=i,\,b=i+1$, $\nu_k-1=\ord(t_a)>\ord(t_b)=\nu_{k+1}-1$ to give $\kappa<\kappa s_i$.
\end{enumerate}
\end{proof}
\begin{lemma}
\label{lsigma}
\label{lengthofkappa}
The length of $\kappa$ is given by the formula \begin{align*}
    l(\kappa)   &=2\dim\gl/\para+\sum\limits_{k'<k}\#\row(k)\#\blue(k')
\end{align*}
\end{lemma}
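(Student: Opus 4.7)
The plan is to apply the length formula of \Cref{formula:length} directly to the affine permutation matrix of $\kappa$ and then identify the resulting combinatorial sum with $2\dim G/P + \sum_{k' < k}\#\row(k)\#\blue(k')$. First I would read off the column data: for $j = l(a) \in \red$ we have $c_j = \nu_a - 1$ with $\sigma(j) = a$, and for $j = m(a) \in \blue$ we have $c_j = -1$ with $\sigma(j) = a + s$. This partitions the double sum $\sum_{i<j}|c_i - c_j - f_\sigma(i,j)|$ into four cases according to the colors of $i$ and $j$.

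Second, I would evaluate the four contributions. The both-$\red$ case reduces to $\sum_{a<b}(\nu_a - \nu_b)$ (since $f_\sigma = 0$ and $\nu$ is non-increasing), which by the standard conjugate-partition identity $\sum_a (2a-1)\nu_a = \sum_i \lambda_i^2$ collapses to $sn - \sum \lambda_i^2$. The two mixed-color cases contribute $\nu_a$ per pair (from $\red$-$\blue$) and $\nu_b + 1$ per pair (from $\blue$-$\red$); summed together the $\nu$-weighted parts collapse because each $\nu_a$ appears with total multiplicity $\#\blue = n - s$, giving $n(n-s)$, and the stray $+1$'s leave a residue $A := \#\{(y, x) \in \blue \times \red : y < x\}$. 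The both-$\blue$ case has $c_i = c_j = -1$, so each pair contributes the inversion indicator $f_\sigma(i,j)$, and the total $(d)$ equals the number of inversions in the sequence $(m(1), \dots, m(n-s))$. Adding the first three cases gives $n^2 - \sum \lambda_i^2 + A = 2\dim G/P + A$.

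It then remains to show $A + (d) = \sum_{k' < k}\#\row(k)\#\blue(k')$. The plan is to observe two facts: (i) since entries of earlier rows are numerically smaller and within a row $\red$ precedes $\blue$, the inequality $y < x$ in $A$ actually forces $\row(y) < \row(x)$; and (ii) because $m$ enumerates $\blue$ row by row from bottom to top (left to right within a row), an inversion of $m$ is precisely a pair of blue elements lying in distinct rows. These combine to give
\[
A + (d) = \#\{(y, x) \in \blue \times \{1, \dots, n\} : \row(y) < \row(x)\} = \sum_{k' < k}\#\blue(k')\#\row(k),
\]
which is the desired identity.

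The main obstacle I expect is the last combinatorial step, particularly analyzing the inversion structure of $m$ under the row-by-row-from-bottom convention and matching the red/blue split within each row against the strict inequality defining $A$. Once both $A$ and $(d)$ are repackaged as pairs of elements with a blue first coordinate in a strictly earlier row, the row-pair summation is immediate.
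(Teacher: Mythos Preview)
Your argument is correct, and the final combinatorial step---showing $A + (d) = \sum_{k'<k}\#\row(k)\#\blue(k')$---is essentially identical to the computation the paper performs for $l(\sigma)$: the paper splits the inversions of $\sigma$ into precisely your $A$ (red--blue pairs with the red element larger) and your $(d)$ (inversions of the sequence $m$), and recombines them the same way.

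Where you genuinely diverge is in how $2\dim G/P$ appears. The paper writes $\kappa = \tau_q\sigma$ with $\tau_q\in\What^{\gOhat}$, so that $l(\kappa)=l(\tau_q)+l(\sigma)$, and then reads off $l(\tau_q)=\dim X_{\gOhat}(\tau_q)=\dim\Ni_{\boldsymbol\nu}=2\dim G/P$ from Lusztig's embedding. You instead feed $\kappa$ directly into the affine length formula of \Cref{formula:length}, and your both-red case together with the $\nu$-weighted parts of the mixed cases collapse, via the conjugate-partition identity $\sum_a(2a-1)\nu_a=\sum_i\lambda_i^2$, to $n^2-\sum\lambda_i^2=2\dim G/P$. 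Your route is more self-contained (no appeal to the geometry of $\Ni_{\boldsymbol\nu}$ or to the minimality of $\tau_q$ in its $W$-coset) at the price of one extra partition identity; the paper's route is shorter because it offloads that computation to the known dimension of the nilpotent orbit closure.
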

\begin{proof}
Note that $\kappa=\tau_\betac\sigma$, where $\tau_\betac$ is given by \Cref{betac}, and $\sigma\in W$ is given by the permutation matrix \begin{align}
\label{form:sigma}
    \sigma=\sum\limits_{i=1}^{s}\E_{i,l(i)}+\sum\limits_{i=1}^{n-s}\E_{i+s,m(i)}
\end{align}
Viewing $\sigma$ as an element of $S_n$, we have $\sigma^{-1}(i)=\begin{cases}l(i)&i\leq s\\m(i-s)&i>s\end{cases}$. 
In particular, \begin{align*}
    l(\sigma)=  &\#\left\{(i,j)\mid 1\leq i<j\leq n,\,\sigma^{-1}(i)>\sigma^{-1}(j)\right\} \\
             =  &\#\left\{(i,j)\mid i<j\leq s, l(i)>l(j)\right\}\\ 
                    &\qquad+\#\left\{(i,j)\mid i\leq s<j,\,l(i)>m(j-s)\right\}\\ 
                    &\qquad\qquad+\#\left\{(i,j)\mid s<i<j,\,m(i-s)>m(j-s)\right\} 
\end{align*}
Recall that $l(i)$ is an increasing sequence, i.e., $i<j<s\implies l(i)<l(j)$, and so \begin{align*}
    l(\sigma)=  &\#\left\{(i,j)\mid i\leq s,\,j\leq n-s,\,l(i)>m(j)\right\}     \\ 
                    &\qquad+\#\left\{(i,j)\mid i<j\leq n-s,\,m(i)>m(j)\right\}  \\
             =  &\#\left\{(i,j)\mid i\in\red,\,j\in\blue,\,i>j\right\}     \\ 
                    &\qquad+\#\left\{(i,j)\mid i<j\leq n-s,\,m(i)>m(j)\right\}  \\
             =  &\sum\limits_{k'<k}\#\left\{(i,j)\mid i\in\red(k),\,j\in\blue(k')\right\} \\ 
                    &\qquad+\sum\limits_{k'<k}\#\left\{(i,j)\mid i\in\blue(k),\,j\in\blue(k')\right\} \\
             =  &\sum\limits_{k'<k}\#\left\{(i,j)\mid i\in\row(k),\,j\in\blue(k')\right\} \\ 
             =  &\sum\limits_{k'<k}\#\row(k)\#\blue(k')    
\end{align*}
Now, it follows from \Cref{actionOfq,actionOfTau} that $\tau_\betac\in\What^{\gOhat}$.
In particular, 
$$l(\tau_\betac)=\dim X_{\gOhat}(\tau_\betac)=\dim\Ni_{\boldsymbol\nu}=2\dim G/P$$
Further, 
$l(\kappa)=l(\tau_\betac)+l(\sigma)=2\dim\gl/\para+\sum\limits_{k'<k}\#\row(k)\#\blue(k')$ as claimed.
\end{proof}

\begin{cor}
\label{maximalresult}
The Schubert variety $X_\Para(\kappa)$ is a compactification of $T^*G/P$ if and only if $P$ is a maximal parabolic subgroup.
\end{cor}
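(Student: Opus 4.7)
The plan is to reduce the question to a dimension count, then analyse when the correction term in the length formula vanishes. Since $\phi_P$ is injective (and $G/P$ is irreducible), $\overline{\image(\phi_P)}$ is an irreducible closed subvariety of $X_\Para(\kappa)$ of dimension $\dim T^*G/P = 2\dim G/P$, while $X_\Para(\kappa)$ is irreducible of dimension $l(\kappa)$. Hence $X_\Para(\kappa)$ is a compactification of $T^*G/P$ if and only if
\[
l(\kappa) \;=\; 2\dim G/P.
\]

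By \Cref{lengthofkappa}, this is equivalent to the vanishing of the correction term
\[
\sum_{k'<k}\#\row(k)\,\#\blue(k') \;=\; 0.
\]
Since $\#\row(k)=\lambda_k\geq 1$ for every $k$, and in particular $\#\row(r)\geq 1$, this vanishing forces $\#\blue(k')=0$ for every $k'<r$. Conversely, if $\#\blue(k')=0$ for all $k'<r$, then since the sum ranges over pairs with $k'<k\leq r$, every term is zero. So the statement reduces to: $\#\blue(k')=0$ for all $k'<r$ if and only if $r\leq 2$, i.e.\ $P$ is maximal.

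To finish, I will compute $\#\blue(k')$ explicitly from the definition in \Cref{redb4blue}. The set $\red(k')$ consists of the top entries of columns that first appear in row $k'$, so
\[
\#\red(k') \;=\; \max\!\bigl(0,\ \lambda_{k'}-\max_{j<k'}\lambda_j\bigr),
\]
whence
\[
\#\blue(k') \;=\; \lambda_{k'}-\#\red(k') \;=\; \min\!\bigl(\lambda_{k'},\ \max_{j<k'}\lambda_j\bigr),
\]
with the convention $\max_{j<1}\lambda_j = 0$. Thus $\#\blue(1)=0$ always, while for $k'\geq 2$ we have $\#\blue(k')\geq \min(\lambda_{k'},\lambda_1)\geq 1$.

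Putting these pieces together: if $P$ is maximal then $r=2$, the only index $k'<r$ is $k'=1$, and the correction sum vanishes; if $P$ is not maximal then $r\geq 3$, and taking $k'=2,k=3$ gives the strictly positive contribution $\lambda_3\,\min(\lambda_1,\lambda_2)\geq 1$. No step looks technically hard; the only subtlety is the bookkeeping identifying $\#\red(k')$ with $\#\set_1(k')$ via the tableau description, which is immediate once one reads off which columns of the tableau first appear in row $k'$.
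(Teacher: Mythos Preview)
Your argument is correct and follows essentially the same route as the paper's proof: both reduce the question to the vanishing of the correction term $\sum_{k'<k}\#\row(k)\,\#\blue(k')$ in the length formula of \Cref{lengthofkappa}, and both observe that this term vanishes exactly when the tableau has two rows. Your version is slightly more explicit in two places---you spell out the dimension argument linking ``compactification'' to $l(\kappa)=2\dim G/P$, and you give the closed formula $\#\blue(k')=\min(\lambda_{k'},\max_{j<k'}\lambda_j)$---while the paper argues more tersely that $\blue(1)=\varnothing$ always and $\blue(2)\neq\varnothing$ once $r\geq 3$, picking the witness pair $(k',k)=(2,r)$ rather than your $(2,3)$; but these are cosmetic differences, not different methods.
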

\begin{proof}
The parabolic subgroup $P$ is maximal if and only if $S_\para=\simple\backslash\left\{\alpha_d\right\}$ for some $d$, equivalently, the corresponding tableau has exactly $2$ rows. 
In this case $\blue\subset\row(2)$, (recall from \Cref{redb4blue} that $\blue=\bigsqcup\limits_i\blue(i)$).
It follows that the second term in \Cref{lengthofkappa} is an empty sum, which implies $l(\kappa)=2\dim\gl/\para$.
Suppose now that the tableau has $r\geq 3$ rows.
In this case, both $\blue(2)$ and $\row(r)$ are non-empty, and so the second term in \Cref{lengthofkappa} is strictly greater than $0$.
\end{proof}

\section{Conormal variety of the Schubert Divisor}
Let $\boldsymbol\lambda$, $P$, and \Para\ be as in the previous section.
In this section, we show that for $X_P(w)$ a Schubert divisor in $G/P$, the conormal variety $N^* X_P(w)$ is an open subset of a Schubert variety in $\gLhat/\Para$.
In particular, $N^* X_P(w)$ is normal, Cohen-Macaulay, and Frobenius split.
\\
\\
We write \cartan, $\mathfrak b$, and \lgl\ for the Lie algebras of $T$, $B$, and $G$ respectively. 
For $\alpha\in\roots$, we denote by $\lgl^\alpha$ the root space corresponding to $\alpha$.

\subsection{The Conormal Variety}
Let $X$ be a closed subvariety in $G/P$, and write $X_{\operatorname{sm}}$ for the smooth locus of $X$.
For $x\in X_{\operatorname{sm}}$, the \emph{conormal fibre} $\con_x$ is the annihilator of $T_xX$ in $T^*_xG/P$.
The \emph{conormal variety} $N^* X$ of $X\hookrightarrow\gl/P$ is then defined to be the closure in $T^*\gl/P$ of the conormal bundle $N^*X_{\operatorname{sm}}$.

\begin{prop}
\label{prop:uw}
Let $\Roots_P$ be the subset of \roots\ generated by $S_P$. 
The conormal variety $N^* X_P(w)$ is the closure in $T^*\gl/P$ of 
$$  \left\{(bw,X)\in\gl\times^P\u\mid b\in\borel,\,X\in\bigoplus\limits_{\alpha\in R}\lgl^\alpha\right\}$$
where $R=\left\{\alpha\in\proots\mid\alpha\not\in\Roots_P,\,w(\alpha)>0\right\}$.
\end{prop}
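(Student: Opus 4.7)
The plan is to pin down the conormal fiber at the single point $wP$ by a root-space computation, then sweep it out by the $B$-action, and finally take the closure in $T^*G/P$. Since the Schubert cell $C_P(w) = BwP/P$ is smooth, open and dense in $X_P(w)$, we have $N^* X_P(w) = \overline{N^* C_P(w)}$. Further, $C_P(w)$ is a single $B$-orbit, so once the conormal fiber at $wP$ is computed, the entire conormal bundle over $C_P(w)$ is obtained by applying $B$ through the $G$-action $g\cdot(h,Y)=(gh,Y)$ on $G\times^P\mathfrak u$.

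The first concrete step is to compute $T_{wP}C_P(w)$. Using the root-space decomposition, $\mathfrak g$ splits as $\mathrm{Ad}(w)\mathfrak p \oplus \mathrm{Ad}(w)\mathfrak u^-$, so $T_{wP}(G/P)$ is identified with $\mathrm{Ad}(w)\mathfrak u^-$ and the infinitesimal action $\mathfrak g \to T_{wP}(G/P)$ becomes the projection along $\mathrm{Ad}(w)\mathfrak p$. Analyzing this projection on $\mathfrak b$ root space by root space shows that $T_{wP}C_P(w)$ is spanned by those $\mathfrak g^\beta$ for $\beta\in\Delta_0^+$ satisfying $w^{-1}\beta \in -\Delta_0^+\setminus\Delta_P$.

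The next step is the annihilator calculation. Under the identification $T^*_{wP}(G/P)\cong\mathfrak u$ coming from the fiber $\{w\}\times\mathfrak u \subset G\times^P\mathfrak u$, $G$-equivariance of the natural pairing reduces the pairing of $(w,X)$ with an infinitesimal action vector $Y\cdot wP$ to the Killing-form pairing at $eP$, yielding
\[\langle (w,X),\ Y\cdot wP\rangle \;=\; \kappa\bigl(X,\ \mathrm{Ad}(w^{-1})Y\bigr)\]
for $X\in\mathfrak u$ and $Y\in\mathfrak g$. Decomposing $X = \sum_{\beta\in\Delta_0^+\setminus\Delta_P}X^\beta$ into root components and imposing vanishing against $\mathrm{Ad}(w^{-1})\mathfrak b = \mathfrak h\oplus\bigoplus_{\alpha\in\Delta_0^+}\mathfrak g^{w^{-1}\alpha}$ forces $X^\beta = 0$ precisely when $w(\beta)<0$. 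Hence the conormal fiber at $wP$ equals $\bigoplus_{\alpha\in R}\mathfrak g^\alpha$, for $R$ as in the statement.

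Finally, sweeping this fiber by the $B$-action $(w,X)\mapsto(bw,X)$ gives $N^*C_P(w) = \{(bw,X)\mid b\in B,\ X\in\bigoplus_{\alpha\in R}\mathfrak g^\alpha\}$, and taking closure in $T^*G/P$ produces $N^*X_P(w)$. The main technical subtlety lies in keeping the two presentations of the cotangent space at $wP$ — the fiber $\{w\}\times\mathfrak u$ inside $G\times^P\mathfrak u$ on one hand, and the Killing-dual of the complement $\mathrm{Ad}(w)\mathfrak u^-$ on the other — cleanly related under the $G$-action, since a sign slip in that translation would flip the condition $w(\alpha)>0$ to $w(\alpha)<0$ and corrupt the final description of $R$.
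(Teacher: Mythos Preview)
Your argument is correct and follows essentially the same route as the paper: compute the tangent space to the open cell $BwP/P$ at a point via root spaces, take the Killing-form annihilator inside $\mathfrak u$, and use $B$-equivariance to sweep out the conormal bundle over the cell. The only cosmetic difference is that the paper computes the tangent space using the explicit parametrization $BwP=bwU_wP$ with $U_w=\langle U_\alpha\mid \alpha\in\Delta_0^-,\,w(\alpha)>0\rangle$, whereas you obtain it from the infinitesimal $\mathfrak b$-action and the projection $\mathfrak g\to\mathfrak g/\mathrm{Ad}(w)\mathfrak p$; these are equivalent descriptions of the same subspace and lead to the same annihilator computation.
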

\begin{proof}
The tangent space of $G/P$ at identity is $\lgl/\mathfrak p$. 
Consider the action of $P$ on $\lgl/\mathfrak p$ induced from the adjoint action of $P$ on $\lgl$.
The tangent bundle $T\,\gl/\para$ is the fiber bundle over $\gl/\para$ associated to the principal \para-bundle $\gl\rightarrow\gl/\para$, for the aforementioned action of \para\ on $\lgl/\mathfrak p$
, i.e., $T\,\gl/\para=\gl\times^\para\lgl/\mathfrak p$.
\\
\\
Let $R'=\left\{\alpha\in\roots^-\mid w(\alpha)>0\right\}$, 
so that $\proots=\Roots^+_P\sqcup R\sqcup -R'$.
Further, let $U_w=\left\langle U_\alpha\mid\alpha\in R'\right\rangle$.
For any point $b\in B$, we have (see, for example \cite{stein}):\begin{align*}
    BwP(\mod P) &=bBwP(\mod P)\\
                &=b(wU_ww^{-1})wP(\mod P)=bwU_wP(\mod P)
\end{align*}
It follows that the tangent subspace at $bw$ of the big cell $BwP(\mod P)$ is given by
\begin{align*}
    T_wBwP(\mod P)&=\left\{(bw,X)\in G\times^P\lgl/\mathfrak p\mid X\in\bigoplus\limits_{\alpha\in R'}\lgl^\alpha/\mathfrak p\right\}
\end{align*}
where 
$\lgl^\alpha/\mathfrak p$ denotes the image of a root space $\lgl^\alpha$ under the map $\lgl\rightarrow\lgl/\mathfrak p$.
Recall that the Killing form identifies the dual of a root space $\lgl^\alpha$ with the root space $\lgl^{-\alpha}$. 
Consequently, a root space $\lgl^\alpha\subset\u$ annihilates $T_{bw}BwP(\mod P)$ if and only if $\alpha\in\proots\backslash\Roots_P^+$ and $-\alpha\not\in R'$,
or equivalently, $\alpha\in R$.
The result now follows from the observation that $BwP(\mod P)$ is a dense open subset of $X_P(w)$, and is contained in the smooth locus of $X_P(w)$.
\end{proof}

\subsection{Schubert divisors}
A Schubert divisor in $G/P$ is a Schubert variety of codimension $1$.
Let $w_0^P$ be the longest element in $W^P$.
The affine permutation matrix for $w_0^P$ is given by\begin{align*}
    w_0^P=\begin{pmatrix}
    0&0&I(\lambda_r)\\
    0&{\iddots}&0\\
    I(\lambda_1)&0&0
    \end{pmatrix}
\end{align*}
where, for $k>0$, $I(k)$ denotes the $k\times k$ identity matrix.
Codimension one Schubert varieties $X_P(w)$ in $G/P$ correspond to $w=s_kw_0^P$, where $k=n-d_i$ for some $1\leq i<r$.
For $1\leq k<n$, let $v_k\in\What$ be given by the affine permutation matrix\begin{align*}
    v_k=\sum\limits_{i=1}^n a_iE_{i,n+1-i}  &&a_i=\begin{cases}t^{-1}&\text{if }i=k\\t&\text{if }i=k+1\\1&\text{otherwise}\end{cases}
\end{align*}
We denote by $v_k^\Para$ the minimal representative of $v_k$ with respect to $\What_\Para$.

\begin{prop}\label{vk}
Let $w=s_kw_0$, where $k=n-d_i$ for some $1\leq i<r$.
Then $X_\Para(v_k^\Para)$ is a compactification of $N^* X_P(w)$ via $\phi_P$. 
\end{prop}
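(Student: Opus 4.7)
The proof combines the description of $N^* X_P(w)$ from \Cref{prop:uw} with the formula for $\phi_P$, identifying the image inside a specific Bruhat cell. By \Cref{prop:uw}, since the conormal fibre has dimension one, the set $R = \{\alpha \in \proots : \alpha \notin \Roots_P,\ w(\alpha) > 0\}$ is a singleton $\{\beta\}$. Writing $w = s_k w_0^P$ and using that $w_0^P \in W^P$ sends $\proots \setminus \Roots_P^+$ entirely into $\roots^-$, one sees that $\beta = -w_0^P(\alpha_k)$: this is the unique positive root $\alpha \notin \Roots_P$ whose image under $w_0^P$ gets flipped back to positive by $s_k$. A generic point of $N^* X_P(w)$ therefore has the form $(bw,\, c\, e_\beta)$ with $b \in \borel$, $c \in \mathbf{k}^\times$, and $e_\beta$ a nonzero root vector in $\lgl^\beta$.

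Applying $\phi_P$ yields $bw(1 - ct^{-1}e_\beta) \pmod{\Para}$, which lies in the same Bruhat cell as $w(1 - ct^{-1}e_\beta) \pmod{\Para}$ by $\borel$-equivariance on the left. The matrix $1 - ct^{-1}e_\beta$ differs from the identity only by a single $-ct^{-1}$ entry at position $\beta$. Left multiplication by the anti-block-diagonal $w = s_k w_0^P$ then produces a matrix whose affine permutation, after a right multiplication by an element of $\Para$ and a left multiplication by an element of $\Borel$ to clean up the $W_\Para$- and $\Borel$-ambiguities, should coincide with that of $v_k$: the $w_0^P$ factor reflects entries onto the anti-diagonal (producing the identity entries of $v_k$), the $s_k$ factor swaps rows $k$ and $k+1$, and the twist by $-ct^{-1}e_\beta$ at the specific position $\beta = -w_0^P(\alpha_k)$ deposits the $t^{-1}$ and $t$ entries of $v_k$ at the prescribed slots $(k, n+1-k)$ and $(k+1, n-k)$. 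This explicit matrix manipulation is the central technical step and the main obstacle of the proof.

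With the identification in hand, $\phi_P$ maps a dense open subset of $N^* X_P(w)$ into the open cell $\Borel v_k^\Para \Para/\Para$; taking closures and invoking \Cref{injective}, one obtains $\phi_P(N^* X_P(w)) \subseteq X_\Para(v_k^\Para)$, with image meeting the dense cell. To upgrade this to a compactification, one compares dimensions: $\dim N^* X_P(w) = \dim G/P = \sum_{i<j}\lambda_i \lambda_j$, while a direct calculation via \Cref{formula:length} applied to the affine permutation matrix of $v_k$ (and accounting for the minimal representative in $v_k \What_\Para$) yields $l(v_k^\Para) = \dim G/P$ as well. The dimension match, combined with the injectivity of $\phi_P$ and the density of its image in $X_\Para(v_k^\Para)$, shows that $\phi_P$ restricts to an open immersion of $N^* X_P(w)$ into $X_\Para(v_k^\Para)$ with dense image, completing the proof.
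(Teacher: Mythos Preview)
Your outline matches the paper's argument: both identify the unique root in $R$ via \Cref{prop:uw}, verify $l(v_k^\Para)=\dim G/P$ from the length formula, and reduce everything to an explicit Bruhat-cell computation for $\phi_P(bw,\,c\,e_\beta)$. The paper makes that last step concrete by exhibiting $b_1,b_2,b_3\in\Borel$ with $(b_1b_2b^{-1})\,b\overset\circ w(1-at^{-1}E_\gamma)\,b_3$ equal to the affine permutation matrix of $v_k^\Para$; one small correction to your sketch is that the unique root is $\beta=-(w_0^P)^{-1}(\alpha_k)$ rather than $-w_0^P(\alpha_k)$, since $w_0^P$ is not in general an involution.
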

\begin{proof}
We observe from \Cref{table} that $v_ks_i<v_k$ for all $\alpha_i\in S_P$, i.e., $v_k$ is maximal with respect to $W_P$. 
It follows that $v_k^\Para=v_kw_P$, where $w_P$ denotes the maximal element of $W_P$.
Next, we use \Cref{eq:length} to compute $l(v_k)=\dim\gl/B$. 
We now have 
$$l(v_k^\Para)=l(v_k)-l(w_P)=\dim G/B-\dim P=\dim G/P=\dim N^* X_P(w)$$
Since $X_\Para(\kappa)$ is irreducible, and has the same dimension as $N^* X_P(w)$, it suffices to show that $\phi_\para\left(N^* X_P(w)\right)\subset X_\Para(\kappa)$.
\\
\\
Let $L$ be the bottom left square sub-matrix of $w_0^P$ of size $d_{k-1}$ and $L'$ the top right square sub-matrix of $w_0^P$ of size $(n-d_{k+1})$.
We fix a lift $\overset\circ w$ of $w$ to the normalizer of $T$:\begin{align*}
\overset\circ w=\begin{pmatrix}
0&0&0&0&0&L'\\
0&0&0&I(\lambda_{k+1}-1)&0&0\\
0&e&0&0&0&0\\
0&0&0&0&1&0\\
0&0&I(\lambda_k-1)&0&0&0\\
L&0&0&0&0&0
\end{pmatrix}
\end{align*}
where $I(k)$ denotes the $k\times k$ identity matrix, and $e=\pm 1$ is determined by the equation $\det\overset\circ w=1$.
Observe that the $e$ is in the $(n-d_k,d_{k-1}+1)$ position and the $1$ in the $(n-d_k+1,d_{k+1})$ position.
Consider the root 
$$\gamma:=\sum\limits_{d_{i-1}<j<d_{i+1}}\alpha_j$$
Under the identification of \Cref{rootSystemA}, we have $\gamma=(d_{i-1}+1,d_{i+1})$.
We check that 
$$\left\{\alpha\in\proots\mid\alpha\not\in\Roots_P,\,w(\alpha)>0\right\}=\left\{\gamma\right\}$$
In particular, we can write a generic point of $N^* X_P(w)$ as $(b\overset\circ w,aE_\gamma)$. 
We may further assume $a\neq 0$.
It is now sufficient to show that 
$$\phi_P(b\overset\circ w,aE_\gamma)=b\overset\circ w\left(1-at^{-1}E_\gamma\right)\in X_\Para(v_k^\Para)$$
Consider $b_1,b_2,b_3\in\Borel$ given by 
\begin{align*}
    b_2 &=I(n)+\dfrac {et}a E_{k+1,k}           &b_3 =I(n)+\dfrac taE_{d_{i+1},d_{i-1}+1}\\
    b_1 &=\sum\limits_{1\leq i\leq n}c_i E_{ii} &c_i=\begin{cases}\frac e a&\text{for }i=k\\ ea&\text{for }i=k+1\\1&\text{otherwise}\end{cases}
\end{align*}
It is easily verified that $(b_1b_2b^{-1})b\overset\circ w(1-t^{-1}aE_\gamma)b_3$ is an affine permutation matrix corresponding to $v_k^\Para\in\What$.
\end{proof}

\begin{cor}
\label{frob}
Let $X_P(w)$ be a Schubert divisor in $G/P$.
The conormal variety $N^* X_P(w)$ is normal, Cohen-Macaulay, and Frobenius split.
\end{cor}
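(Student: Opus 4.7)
The plan is to leverage \Cref{vk}, which identifies $N^* X_P(w)$ as an open subvariety of the affine Schubert variety $X_\Para(v_k^\Para)$ via $\phi_P$. Since all three properties in question (normality, Cohen-Macaulayness, Frobenius splitness) are preserved under passage to open subvarieties, it suffices to establish them for $X_\Para(v_k^\Para)$ itself.

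First I would invoke the standard structure theory of affine Schubert varieties in Kac-Moody flag varieties: for any $w \in \What^\Para$, the Schubert variety $X_\Para(w) \subset \gLhat/\Para$ is normal, Cohen-Macaulay, and Frobenius split. Normality is already cited in the paper (see \cite{gf} as referenced after the Bruhat decomposition discussion). Cohen-Macaulayness and Frobenius splitting compatibility for Schubert varieties in Kac-Moody partial flag ind-varieties are established in the work of Kumar and Mathieu (a reference such as \cite{sk} covers these results in the Kac-Moody setting).

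Second, I would observe that each of the three properties descends to open subsets. Normality is local on the structure sheaf, so any open subscheme of a normal scheme is normal. Cohen-Macaulayness is likewise a local (stalk-level) condition, hence passes to open subvarieties. For Frobenius splitness, a splitting $\sigma : F_* \mathcal O_{X_\Para(v_k^\Para)} \to \mathcal O_{X_\Para(v_k^\Para)}$ restricts to a splitting of $F_* \mathcal O_U \to \mathcal O_U$ on any open $U$, so Frobenius splitness is inherited as well.

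Combining these two inputs with \Cref{vk}, which places $N^* X_P(w)$ as an open subvariety of $X_\Para(v_k^\Para)$ through the embedding $\phi_P$, yields the claimed properties for $N^* X_P(w)$. There is no real obstacle here beyond citing the correct Kac-Moody references for the three properties of $X_\Para(v_k^\Para)$; the only subtlety worth mentioning explicitly is that $\gLhat/\Para$ is an ind-variety rather than a variety, but $X_\Para(v_k^\Para)$ is a genuine finite-dimensional projective variety of dimension $l(v_k^\Para)$, so standard commutative algebra applies without modification.
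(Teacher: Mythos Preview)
Your proposal is correct and follows essentially the same argument as the paper: invoke \Cref{vk} to realize $N^* X_P(w)$ as an open subset of the affine Schubert variety $X_\Para(v_k^\Para)$, then cite the known normality, Cohen--Macaulayness, and Frobenius splitting of Schubert varieties in $\gLhat/\Para$ (the paper uses \cite{gf,mr}) and note these pass to open subsets. The paper's proof is just a two-sentence version of what you wrote.
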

\begin{proof}
Schubert varieties in $\gL/\Para$ are normal, Cohen-Macaulay, and Frobenius split (cf. \cite{gf,mr}). 
Therefore, the same is true for $N^* X_P(w)$, since it is an open subset of $X_\Para(v_k^\Para)$.
\end{proof}


\begingroup
\small

\endgroup
\end{document}